\newtheorem{thm}{Theorem}[section]
\newtheorem{lem}[thm]{Lemma}
\newtheorem{cor}[thm]{Corollary}
\theoremstyle{definition}									
\newtheorem{mydef}[thm]{Definition}
\newtheorem{rem}[thm]{Remark}
\newtheorem{ex}[thm]{Example}
\newproof{pf}{Proof}
\numberwithin{equation}{section}							
\newcommand{\R}{\mathbb R}								
\newcommand{\N}{\mathbb N}								
\newcommand{\C}{\mathbb C}								
\newcommand{\Z}{\mathbb Z}								
\newcommand{\ii}{\textup{i}}								
\newcommand{\nll}{\mathop{\rm null}}							
\newcommand{\sr}[1]{\rho\left(#1\right)}						
\newcommand{\sig}[1]{\sigma\left(#1\right)}						
\newcommand{\tth}{\text{th}}								
\newcommand{\mr}[1]{M_{#1}(\R)}							
\newcommand{\mc}[1]{M_{#1}(\C)}							
\newcommand{\jordan}[2]{J_{#1}{\left( #2 \right)}}				
\newcommand{\inv}[1]{#1^{-1}}							
\newcommand{\imod}[1]{\left( \bmod{#1} \right)}					
\journal{Linear Algebra and its Applications}
\begin{document}
\begin{frontmatter}
\title{Matrix Roots of Eventually Positive Matrices} 

\author[addy2]{Judith J. McDonald}										
\ead{jmcdonald@math.wsu.edu}
\ead[url]{http://www.math.wsu.edu/math/faculty/jmcdonald/}

\author[addy1]{Pietro Paparella\corref{corpp}}
\ead{ppaparella@wm.edu}
\ead[url]{http://ppaparella.people.wm.edu/}

\author[addy2]{Michael J. Tsatsomeros}
\ead{tsat@math.wsu.edu}
\ead[url]{http://www.math.wsu.edu/faculty/tsat/}

\cortext[corpp]{Corresponding author.}

\address[addy2]{Department of Mathematics, Washington State University, Pullman, WA 99164-1113, U.S.A.}
\address[addy1]{Department of Mathematics, College of William \& Mary, Williamsburg, VA 23187-8795, U.S.A.}

\begin{abstract}
Eventually positive matrices are real matrices whose powers become and remain strictly positive. As such, eventually positive matrices are {\it a fortiori} matrix roots of positive matrices, which motivates us to study the matrix roots of primitive matrices. Using classical matrix function theory and Perron-Frobenius theory, we characterize, classify, and describe in terms of the real Jordan canonical form the $p$th-roots of eventually positive matrices.
\end{abstract}

\begin{keyword}
matrix function \sep eventually positive matrix \sep primitive matrix \sep matrix root \sep Perron-Frobenius theorem \sep Perron-Frobenius Property \sep stochastic matrix 

\MSC[2008] 15B48 \sep 15A21
\end{keyword}
\end{frontmatter}

\section{Introduction} \label{intro}

A matrix $A \in \mr{n}$ is {\it eventually positive} ({\it nonnegative}) if there exists a nonnegative integer $p$ such that $A^k$ is entrywise positive (nonnegative) for all $k \geq p$. If $p$ is the smallest such integer, then $p$ is called the {\it power index of $A$} and is denoted by $p(A)$.

Eventually nonnegative matrices have been the subject of study in several papers \cite{cnm2002, cnm2004,f1978, jt2004, n2006, nt2009, zm2003, zt1999} and it is well-known that the notions of eventual positivity and nonnegativity are associated with properties of the eigenspace corresponding to the spectral radius.

A matrix $A\in \mr{n}$ has the {\it Perron-Frobenius property} if its spectral radius is a positive eigenvalue corresponding to an entrywise nonnegative eigenvector. The {\it strong Perron-Frobenius property} further requires that the spectral radius is simple; that it dominates in modulus every other eigenvalue of $A$; and that it has an entrywise positive eigenvector.

Several challenges regarding the theory and applications of eventually nonnegative matrices remain unresolved. For example, eventual positivity of $A$ is equivalent to $A$ and $A^T$ having the strong Perron-Frobenius property, however, the Perron-Frobenius property for $A$ and $A^T$ is a necessary but not sufficient condition for eventual nonnegativity of $A$.

An eventually nonnegative (positive) matrix with power index $p=p(A)$ is, {\it a fortiori}, a $p$th-root of the nonnegative (positive) matrix $A^p$. As a consequence, in order to gain more insight into the powers of an eventually nonnegative (positive) matrix, it is only natural to examine the roots of matrices that possess the (strong) Perron-Frobenius property. We begin this pursuit herein by characterizing the roots of matrices that possess the strong Perron-Frobenius property.

We proceed as follows: in \hyperref[theory]{Section \ref*{theory}}, we recall results concerning matrix functions and, for the sake of completeness and clarity, we present facts needed to analyze a matrix function via the real Jordan canonical form; we also use the real Jordan canonical form to give alternate proofs for \cite[Theorems 2.3 and 2.4]{hl2011}. In \hyperref[main_results]{Section \ref*{main_results}}, we recall results from the Perron-Frobenius theory of nonnegative matrices and (eventually) positive matrices. We characterize the eventually positive roots of a general primitive matrix, and illustrate our main results via examples. We also present a related result concerning {\it eventually stochastic} matrices. 

\section{Matrix roots via the complex and real Jordan canonical form} \label{theory}

We review some basic notions and results from the theory of matrix functions (for further results, see \cite{h2008}, \cite[Chapter 9]{hj1994}, or \cite[Chapter 6]{lt1985}).

Let $\jordan{n}{\lambda} \in \mc{n}$ denote the $n \times n$ Jordan block with eigenvalue $\lambda$. For $A \in \mc{n}$, let $J = \inv{Z} A Z = \bigoplus_{i=1}^t \jordan{n_i}{\lambda_i} = \bigoplus_{i=1}^t J_{n_i}$, where $\sum n_i = n$, denote its Jordan canonical form. Denote by $\lambda_1,\dots,\lambda_s$ the {\it distinct} eigenvalues of $A$, and, for $i=1,\dots,s$, let $m_i$ denote the {\it index} of $\lambda_i$, i.e., the size of the largest Jordan block associated with $\lambda_i$. Denote by $\ii$ the imaginary unit, i.e., $\ii := \sqrt{-1}$.

\begin{mydef} 
Let $f : \C \rightarrow \C$ be a function and let $f^{(k)}$ denote the $k$th derivative of $f$. The function $f$ is said to be {\it defined on the spectrum of $A$} if the values
\begin{align*}
\begin{array}{c c c}
f^{(k)}(\lambda_i), & k=0,\dots,m_i-1, & i=1,\dots,s,
\end{array}
\end{align*}
called {\it the values of the function $f$ on the spectrum of $A$}, exist.  
\end{mydef}

\begin{mydef}[Matrix function via Jordan canonical form] 
If $f$ is defined on the spectrum of $A \in \mc{n}$, then
\begin{align*} 
f(A) := Z f(J) \inv{Z} = Z \left( \bigoplus_{i=1}^t f(J_{n_i}) \right) \inv{Z}, 
\end{align*}
where
\begin{align}
f(J_{n_i}) := 
\begin{bmatrix} 
f(\lambda_i) & f'(\lambda_i) & \dots   & \frac{f^{(n_i-1)}(\lambda_i)}{(n_i - 1)!} 	\\
	          & f(\lambda_i)  & \ddots & \vdots 						\\
	          &                       & \ddots & f'(\lambda_i)						\\
	          &  	  	   &             & f(\lambda_i)
\end{bmatrix}. \label{fjb}
\end{align}
\end{mydef}

\begin{mydef} 
For $z = r \exp{\left( \ii \theta \right)} \in \C$, where $r >0$, and an integer $p >1$, let 
\begin{align*} 
z^{1/p} := r^{1/p} \exp{( \ii \theta/p )}, 
\end{align*}
and, for $j \in \left\{ 0, 1, \dots, p-1 \right\}$, define
\begin{align*}  
f_j (z) = z^{1/p} \exp{(\ii2 \pi j/ p)} = r^{1/p} \exp{\left( \ii \left[ \theta + 2 \pi j \right] / p \right)}, 
\end{align*}
i.e., $f_j$ is the $(j+1)$st-branch of the $p$th-root function.
\end{mydef}

Note that 
\begin{align}
f_j^{(k)} (z) = \frac{1}{p^k} \prod_{i=0}^{k-1} (1 - ip) \left[ r^{(1-kp)/p} \exp{\left( \ii [2 \pi j + \theta(1 - kp)]/p \right)} \right], \label{pth_rt_der_fx}
\end{align}
where $k$ is a nonnegative integer and the product $\prod_{i=0}^{k-1} (1 - ip)$ is empty when $k=0$.

Next we present several technical lemmas on the branches of the $p$th-root function.

\begin{lem} \label{lem_conj} 
For $z \in \C$, $\Im{(z)} \neq 0$, $j$, $j' \in \{0,1,\dots,p-1\}$, and $f_j^{(k)}$ as in \eqref{pth_rt_der_fx}, we have $f_j^{(k)} (z) = \overline{f_{j'}^{(k)} (\bar{z})}$ if and only if $j + j' \equiv 0 \imod{p}$. 
\end{lem}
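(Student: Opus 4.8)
The plan is to establish both implications simultaneously by reducing the identity $f_j^{(k)}(z) = \overline{f_{j'}^{(k)}(\bar z)}$ to a single congruence, working directly from the closed form \eqref{pth_rt_der_fx}. Writing $z = r\exp(\ii\theta)$ with $r>0$, so that $\bar z = r\exp(-\ii\theta)$, substitution into \eqref{pth_rt_der_fx} gives
\[
f_j^{(k)}(z) = c_k\,\exp\!\left(\ii\,[2\pi j + \theta(1-kp)]/p\right),\qquad
f_{j'}^{(k)}(\bar z) = c_k\,\exp\!\left(\ii\,[2\pi j' - \theta(1-kp)]/p\right),
\]
where $c_k := \dfrac{1}{p^k}\left(\prod_{i=0}^{k-1}(1-ip)\right)r^{(1-kp)/p}$. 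The first thing I would record is that $c_k$ is a \emph{nonzero real} number: the factor $r^{(1-kp)/p}$ is a positive real since $r>0$, each factor $1-ip$ with $i\geq 1$ is a nonzero integer because $p>1$, and the empty-product convention covers $k=0$.

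Because $c_k\in\R\setminus\{0\}$, conjugating the second expression yields $\overline{f_{j'}^{(k)}(\bar z)} = c_k\,\exp\!\left(\ii\,[-2\pi j' + \theta(1-kp)]/p\right)$, which now carries exactly the same argument-dependent term $\theta(1-kp)/p$ as $f_j^{(k)}(z)$. Dividing the proposed identity by $c_k$ and then by the common unimodular factor $\exp(\ii\theta(1-kp)/p)$, it collapses to $\exp(\ii 2\pi j/p) = \exp(-\ii 2\pi j'/p)$, i.e. $\exp\!\left(\ii 2\pi(j+j')/p\right)=1$. This holds if and only if $(j+j')/p\in\Z$, that is, $j+j'\equiv 0 \imod{p}$; since every step is reversible, both directions of the claimed equivalence follow at once.

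I do not expect a genuine obstacle here; the argument is essentially bookkeeping with \eqref{pth_rt_der_fx}. The one point that needs care is the cancellation of $c_k$: one must check it is real, so that conjugation fixes it, and nonzero, so that it may be divided out — this is precisely where the hypotheses $r=|z|>0$ and $p>1$ are used. The hypothesis $\Im(z)\neq 0$ plays no role in the computation itself (the $\theta$-terms cancel for every $\theta$), but it is the natural setting for the lemma, recording the case of a conjugate pair of non-real points to which the result will later be applied.
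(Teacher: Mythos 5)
Your proposal is correct and follows essentially the same route as the paper's proof: both reduce the identity to the single unimodular-exponential condition $\exp(\ii 2\pi(j+j')/p)=1$ and then to the congruence $j+j'\equiv 0\pmod p$. You are somewhat more explicit about why the prefactor $c_k$ may be cancelled (real and nonzero), which the paper leaves tacit; this is a nice bit of bookkeeping but not a different argument. One small correction to your closing remark: the hypothesis $\Im(z)\neq 0$ is not purely a matter of "natural setting" --- it is what guarantees that $\bar z$ has polar form $r\exp(-\ii\theta)$ with $-\theta$ its principal argument, so that \eqref{pth_rt_der_fx} applies to $\bar z$ with $\theta$ replaced by $-\theta$; when $z$ is negative real (i.e.\ $\theta=\pi$) one has $\bar z = z$ and $\arg(\bar z)=\pi\neq -\pi$, which is precisely the situation handled separately in Lemma~\ref{lem_conj2}.
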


\begin{proof}[\sc Proof] Note that
\begin{align*}
f_j^{(k)} (z) &= \overline{f_{j'}^{(k)} (\bar{z})} \\
\Longleftrightarrow \exp{\left( \ii [2 \pi j + \theta(1 - kp)]/p \right)} &= \exp{\left( \ii [-2 \pi j' + \theta(1 - kp)]/p \right)}	\\
\Longleftrightarrow [2 \pi j + \theta(1 - kp)]/p &= [-2 \pi j' + \theta(1 - kp)]/p  + 2 \pi \ell 						\\
\Longleftrightarrow 2 \pi j + \theta(1 - kp) &= 2 \pi (p \ell - j') + \theta(1 - kp) 								\\
\Longleftrightarrow j + j' &= \ell p 														\\
\Longleftrightarrow j + j' &\equiv 0 \imod{p},
\end{align*}
where $\ell \in \Z$. Finally, we remark that $j + j' \equiv 0 \imod{p} \Longleftrightarrow j = j' = 0$ or $j = p - j'$.
\end{proof}

\begin{lem} \label{lem_conj2} 
Let $z=r \exp{\left( \ii \pi \right)}$, $r > 0$. For $j$, $j' \in \{0,1,\dots,p-1\}$ and $f_j^{(k)}$ as in \eqref{pth_rt_der_fx}, we have $f_j^{(k)} (z) = \overline{f_{j'}^{(k)} (z)}$ if and only if $j + j' \equiv -1 \imod{p}$. 
\end{lem}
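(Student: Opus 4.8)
The plan is to mimic the computation in the proof of Lemma \ref{lem_conj}, specializing to $\theta = \pi$ and noting that now both sides involve the \emph{same} argument $z$ (rather than $z$ and $\bar z$), so that the conjugation on the right-hand side only flips the sign of the exponential's phase. First I would use \eqref{pth_rt_der_fx} to write out $f_j^{(k)}(z)$ and $\overline{f_{j'}^{(k)}(z)}$ explicitly with $r$ fixed and $\theta = \pi$. The modulus factors $\frac{1}{p^k}\prod_{i=0}^{k-1}(1-ip)\, r^{(1-kp)/p}$ are real and identical on both sides, so the equality reduces to an equality of unit-modulus exponentials:
\[
\exp\!\left(\ii\,[2\pi j + \pi(1-kp)]/p\right) = \exp\!\left(\ii\,[-2\pi j' - \pi(1-kp)]/p\right).
\]

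Next I would take arguments of both sides modulo $2\pi$: the equation holds iff
\[
[2\pi j + \pi(1-kp)]/p = [-2\pi j' - \pi(1-kp)]/p + 2\pi\ell
\]
for some $\ell \in \Z$. Multiplying through by $p$ and collecting terms gives $2\pi j + 2\pi j' + 2\pi(1-kp) = 2\pi p\ell$, i.e. $j + j' + 1 - kp = p\ell$, hence $j + j' + 1 = p(\ell + k)$, which is exactly $j + j' \equiv -1 \imod{p}$. (The term $-kp$ is an integer multiple of $p$ and therefore harmless, which is the one place where the computation differs from Lemma \ref{lem_conj}; I would remark on that explicitly.)

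I do not anticipate a genuine obstacle here — the argument is a routine modular computation parallel to Lemma \ref{lem_conj}. The one point requiring a little care is the bookkeeping of the $\pi(1-kp)$ phase under conjugation: because $\theta = \pi$ sits on the negative real axis, $\bar z = z$, so the $\theta$-dependent part does not cancel as it did in Lemma \ref{lem_conj} but instead doubles, and it is precisely this doubling of the constant $\pi/p$ contribution that produces the shift from $\equiv 0$ to $\equiv -1$. I would close by noting, as in the previous lemma, the concrete description of the solution set: $j + j' \equiv -1 \imod{p}$ holds iff $\{j,j'\}$ pairs up as $j' = p-1-j$ (with the pair $j = j' = (p-1)/2$ when $p$ is odd).
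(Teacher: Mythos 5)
Your proposal is correct and follows essentially the same route as the paper's proof: reduce to equality of the unit-modulus exponentials (the real prefactors cancel), compare arguments modulo $2\pi$, clear the factor of $p$, and absorb the $kp$ term into the multiple of $p$ to obtain $j+j' \equiv -1 \imod{p}$. Your closing observation that this pins down $j+j' = p-1$ on the range $\{0,\dots,p-1\}$ matches the paper's concluding remark as well.
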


\begin{proof}[\sc Proof] Note that
\begin{align*}
f_j^{(k)} (z) &= \overline{f_{j'}^{(k)} (z)} 											\\
\Longleftrightarrow \exp{\left( \ii [2 \pi j + \pi(1 - kp)]/p \right)} &= \exp{\left( \ii [-2 \pi j' - \pi(1 - kp)]/p \right)}	\\
\Longleftrightarrow [2 \pi j + \pi(1 - kp)]/p &= [-2 \pi j' - \pi(1 - kp)]/p + 2 \pi \ell 					\\
\Longleftrightarrow 2 \pi j + \pi(1 - kp) &= 2 \pi \ell p - 2 \pi j' - \pi(1 - kp)							\\
\Longleftrightarrow 2 \pi [(j +  j') - (kp - 1)] &= 2 \pi \ell p									\\
\Longleftrightarrow j + j' &\equiv (kp - 1) \imod{p}										\\
\Longleftrightarrow j + j' &\equiv -1 \imod{p},
\end{align*} 
where $\ell \in \Z$. Finally, we remark that $j + j' \equiv -1 \imod{p} \Longleftrightarrow$ or $j + j' = p - 1$.
\end{proof}

The following theorem classifies all $p$th-roots of a general nonsingular matrix \cite[Theorems 2.1 and 2.2]{s2003}. 

\begin{thm}[Classification of $p$th-roots of nonsingular matrices] \label{thm_class_rts} 
If $A \in \mc{n}$ is nonsingular, then $A$ has precisely $p^s$ $p$th-roots that are expressible as polynomials in $A$, given by 
\begin{align}
X_j = Z \left( \bigoplus_{i=1}^t f_{j_i} (J_{n_i}) \right) \inv{Z}, \label{prim_roots}			
\end{align}
where $j = \begin{pmatrix} j_1, \dots, j_t \end{pmatrix}$, $j_i \in \{0,1,\dots,p-1\}$, and $j_i = j_k$ whenever $\lambda_i = \lambda_k$. 

If $s < t$, then $A$ has additional $p$th-roots that form parameterized families 
\begin{align}
X_j (U) = Z U \left( \bigoplus_{i=1}^t f_{j_i} (J_{n_i}) \right) \inv{U} \inv{Z}, \label{nnprim_roots} 
\end{align}
where $U$ is an arbitrary nonsingular matrix that commutes with $J$ and, for each $j$, there exist $i$ and $k$, depending on $j$, such that $\lambda_i = \lambda_k$, while $j_i \neq j_k$.
\end{thm}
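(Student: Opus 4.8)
The plan is to reduce the statement to the known scalar theory of $p$th roots of a single Jordan block, then patch the blocks together and account for the ambiguity introduced by repeated eigenvalues. First I would recall the classical fact (see \cite[Chapter 6]{lt1985} or \cite{h2008}) that a nonsingular Jordan block $J_{n_i}(\lambda_i)$ has exactly $p$ $p$th roots that are polynomials in $J_{n_i}(\lambda_i)$, namely $f_{j_i}(J_{n_i})$ for $j_i \in \{0,1,\dots,p-1\}$, and that these are \emph{all} the $p$th roots which are themselves a single Jordan block with the same structure; the key point here is that $J_{n_i}(\lambda_i)$ is nonderogatory, so any matrix commuting with it is a polynomial in it, and raising such a polynomial to the $p$th power and matching the value and first $n_i-1$ derivatives at $\lambda_i$ forces it to equal one of the $f_{j_i}(J_{n_i})$. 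This uses \eqref{fjb} and the fact that $f_{j_i}$ is defined on the spectrum of $J_{n_i}$ because $\lambda_i \neq 0$.

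Next I would show that \eqref{prim_roots} does give $p$th roots and that they are exactly the $p$th roots expressible as polynomials in $A$. If $X = q(A)$ for a polynomial $q$, then $X$ commutes with $A$, hence $\inv{Z} X Z$ is block diagonal conformally with $J$ and, on each block, equals a polynomial in $J_{n_i}$; imposing $X^p = A$ forces each block to be a $p$th root of $J_{n_i}$, hence of the form $f_{j_i}(J_{n_i})$ by the previous paragraph. The consistency requirement that $j_i = j_k$ whenever $\lambda_i = \lambda_k$ comes precisely from the Hermite-interpolation definition of $q(A)$: a polynomial takes the same values and derivatives on equal eigenvalues, so the branch chosen must agree across all Jordan blocks sharing an eigenvalue. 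Conversely, for any admissible choice of $j=(j_1,\dots,j_t)$, the branch-consistency condition guarantees that the values $f_{j_i}^{(k)}(\lambda_i)$ are well defined as values of a single function on the spectrum of $A$, so $X_j$ in \eqref{prim_roots} equals $f(A)$ for the composite function $f$ and is therefore a polynomial in $A$; counting the independent choices gives $p^s$ such roots.

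Finally I would handle the non-primary roots in \eqref{nnprim_roots} for the case $s<t$. Here the idea is that the general $p$th root $X$ need not commute with $A$; what is true is that $X^p = A$ forces $X$ and $A$ to share an invariant subspace structure, and after conjugating by $Z$ one may block-diagonalize $\inv{Z}XZ$ into a direct sum $\bigoplus f_{j_i}(J_{n_i})$ \emph{up to a similarity that commutes with $J$} — that is, $\inv{Z}XZ = U\left(\bigoplus_{i=1}^t f_{j_i}(J_{n_i})\right)\inv{U}$ with $UJ = JU$. The point is that when two blocks $J_{n_i}$ and $J_{n_k}$ have $\lambda_i=\lambda_k$ but are assigned different branches $j_i\neq j_k$, the blocks $f_{j_i}(J_{n_i})$ and $f_{j_k}(J_{n_k})$ have different spectra, so $U$ cannot be absorbed into the direct sum and genuinely new roots appear; when all branch choices on a common eigenvalue agree, $U$ commutes with the relevant diagonal block and $X_j(U) = X_j$, recovering a primary root. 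The main obstacle — and the step I would lean on \cite[Theorems 2.1 and 2.2]{s2003} for rather than reprove in full — is establishing that \emph{every} $p$th root arises in this block-diagonalized-up-to-commuting-$U$ form; this requires the structure theorem for matrices commuting with $A$ together with an argument that the restriction of $X$ to each generalized eigenspace is conjugate to a direct sum of blocks $f_{j_i}(J_{n_i})$, which is where the nonsingularity of $A$ is essential (it guarantees all branches are analytic at every eigenvalue and that no nilpotent complications arise).
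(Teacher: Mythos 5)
The paper does not prove this theorem: it is stated verbatim as a citation of Theorems~2.1 and~2.2 of Smith~\cite{s2003}, with no argument given. So there is no ``paper's own proof'' for your attempt to be compared against; what you have written is an attempt to reconstruct the proof of the cited result, which is a different exercise than the one the paper sets itself. With that caveat, your outline is broadly correct in spirit but has one logical slip worth flagging.

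In the second paragraph you argue ``$X = q(A)$ for a polynomial $q$, then $X$ commutes with $A$, \emph{hence} $\inv{Z}XZ$ is block diagonal conformally with $J$.'' That implication is false: a matrix commuting with $J$ need not respect the individual Jordan blocks when distinct blocks share an eigenvalue (which is precisely the derogatory case that generates the nonprimary roots). The block-diagonality you want follows for a different and simpler reason: if $X = q(A)$ then $\inv{Z}XZ = q(\inv{Z}AZ) = q(J) = \bigoplus_i q(J_{n_i})$, because polynomials (and, more generally, matrix functions) respect direct sums. Commutation alone is too weak to conclude block-diagonality and should not be invoked as the justification here. Once that is repaired, the per-block reduction to the $p$ branches $f_{j_i}$ and the branch-consistency condition ($j_i = j_k$ when $\lambda_i = \lambda_k$, which is exactly the constraint that the branch data come from a single function defined on the spectrum of $A$) are correct, and the $p^s$ count follows.

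For the nonprimary roots you have correctly identified the structure of the argument---every $p$th root $X$ satisfies $XA = AX$, so $\inv{Z}XZ$ commutes with $J$; one then uses the structure theorem for the commutant of a Jordan form to show $\inv{Z}XZ$ is similar, via a $U$ in that commutant, to a block-diagonal direct sum of branch roots, with genuinely new roots arising exactly when a repeated eigenvalue receives inconsistent branch assignments across its Jordan blocks. You explicitly defer this classification step to~\cite{s2003}, which is reasonable, but it means your proposal, like the paper, ultimately rests on Smith's theorem rather than replacing it; the additional content you supply is the reduction of the primary case to the nonderogatory single-block case, which is standard but stated cleanly.
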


In the theory of matrix functions, the roots given by \eqref{prim_roots} are called the {\it primary roots} of $A$, and the roots given by \eqref{nnprim_roots}, which exist only if $A$ is {\it derogatory} (i.e., some eigenvalue appears in more than one Jordan block), are called the {\it nonprimary roots} \cite[Chapter 1]{h2008}.

The next result provides a necessary and sufficient condition for the existence of a root for a general matrix, which is clearly satisfied by any nonsingular matrix (see \cite{p2002}).

\begin{thm}[Existence of $p$th-root] \label{asc_seq_thm}
A matrix $A \in \mc{n}$ has a $p$th-root if and only if the ``ascent sequence" of integers $d_1,d_2,\dots$ defined by
\begin{align*} 
d_i = \dim{(\nll{(A^i)})} - \dim{(\nll{(A^{i-1})})}
\end{align*}
has the property that for every integer $\nu \geq 0$ no more than one element of the sequence lies strictly between $p\nu$ and $p(\nu + 1)$.
\end{thm}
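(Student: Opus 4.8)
The plan is to reduce the statement, first to the case of a nilpotent matrix, and then to a purely combinatorial fact about integer sequences. \emph{Reduction to the nilpotent case.} Writing $\C^n = \mathrm{range}(A^n) \oplus \nll(A^n)$, a decomposition into $A$-invariant subspaces, $A$ is similar to $B \oplus C$ with $B$ nonsingular and $C$ nilpotent. By \hyperref[thm_class_rts]{Theorem~\ref*{thm_class_rts}}, $B$ has a $p$th root, so if $C$ has one then so does $A$. Conversely, if $X^p = A$, then $X$ commutes with $A$ and therefore leaves $\nll(A^n)$ invariant; its restriction to that subspace is a $p$th root of a matrix similar to $C$. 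Since $\nll(B^i) = \{0\}$, we have $\nll(A^i) = \nll(C^i)$ for every $i \ge 1$, so $A$ and $C$ have the same ascent sequence, and it suffices to treat nilpotent matrices.

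\emph{Passage to sequences.} A nilpotent matrix is determined up to similarity by its ascent sequence, and these sequences are exactly the non-increasing, eventually-zero sequences of non-negative integers (the $i$th term being the number of Jordan blocks of size at least $i$). Any $p$th root $X$ of a nilpotent matrix is nilpotent, and since $\nll((X^p)^i) = \nll(X^{pi})$, the ascent sequence $(d_i)$ of $X^p$ is recovered from the ascent sequence $(\tilde d_j)$ of $X$ by
\begin{align*}
d_i = \tilde d_{p(i-1)+1} + \tilde d_{p(i-1)+2} + \dots + \tilde d_{pi}, \qquad i \ge 1.
\end{align*}
Consequently the nilpotent matrix with ascent sequence $(d_i)$ has a $p$th root if and only if there is a non-increasing, eventually-zero sequence $(\tilde d_j)$ of non-negative integers satisfying these identities.

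\emph{The combinatorial equivalence.} It then remains to prove that such a $(\tilde d_j)$ exists if and only if, for each $\nu \ge 0$, at most one $d_i$ lies strictly in $(p\nu, p(\nu+1))$. For the ``only if'' direction, suppose $d_i$ and $d_{i+1}$ (which may be taken adjacent, by monotonicity) both lie in $(p\nu, p(\nu+1))$. Since $d_i < p(\nu+1)$ is a sum of $p$ terms of $(\tilde d_j)$, not all of those terms exceed $\nu$, so $\tilde d_{pi} \le \nu$; but then $d_{i+1} = \tilde d_{pi+1} + \dots + \tilde d_{p(i+1)} \le p\nu$, contradicting $d_{i+1} > p\nu$. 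For the ``if'' direction, I would build $(\tilde d_j)$ block by block, making each block as flat as possible: writing $d_i = pq_i + r_i$ with $0 \le r_i < p$, let block $i$ (positions $p(i-1)+1$ through $pi$) consist of $r_i$ copies of $q_i + 1$ followed by $p - r_i$ copies of $q_i$. Each block is non-increasing and sums to $d_i$ by construction; the one point needing the hypothesis is that the last entry $q_i$ of block $i$ is at least the first entry of block $i+1$, which equals $q_{i+1}+1$ when $p \nmid d_{i+1}$ and $q_{i+1}$ otherwise. The latter case follows from $d_i \ge d_{i+1}$; in the former, $q_i = q_{i+1}$ would force $d_i$ and $d_{i+1}$ both to lie strictly in the band $(pq_{i+1}, p(q_{i+1}+1))$, against the hypothesis, so $q_i \ge q_{i+1}+1$. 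The resulting $(\tilde d_j)$ is a legitimate ascent sequence, which yields a nilpotent $X$ with $X^p$ similar to the original matrix. I expect the two reductions to be routine; the crux is recognizing that the stated band condition is exactly what makes the flat-block construction non-increasing across block boundaries — and, dually, what blocks the contradiction in the ``only if'' direction.
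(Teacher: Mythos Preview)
The paper does not actually prove this theorem: it is quoted from \cite{p2002} without argument and used as a black box (see the sentence ``The next result provides a necessary and sufficient condition for the existence of a root for a general matrix \dots (see \cite{p2002})'' preceding the statement). So there is no in-paper proof to compare against.

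Your argument is correct and is essentially the standard one. A few minor points worth tightening. In the reduction step you only restrict $X$ to $\nll(A^n)$; you might note explicitly that the resulting operator is a $p$th root of $A|_{\nll(A^n)}$, which is similar to $C$, and that a $p$th root transfers along similarity. In the ``if'' construction, the boundary check has one small sub-case you slide over: when $p\nmid d_{i+1}$ and $q_i=q_{i+1}$ but $r_i=0$, your band argument as written does not literally apply (since $d_i=pq_i$ is not strictly inside the band); however this sub-case is impossible because it would give $d_{i+1}=pq_{i+1}+r_{i+1}>pq_{i+1}=d_i$, contradicting monotonicity. With that remark the construction goes through cleanly. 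The ``only if'' direction is fine as you wrote it: non-increasingness lets you assume the two offending terms are adjacent, and then the averaging/last-term bound forces $d_{i+1}\le p\nu$.
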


Before we state results concerning the matrix roots of a real matrix, we state some well-known results concerning the real Jordan canonical form for real matrices (see \cite[Section 3.4]{hj1990}, \cite[Section 6.7]{lt1985}).

\begin{thm}[Real Jordan canonical form] \label{rjcf_thm} 
If $A \in \mr{n}$ has $r$ real eigenvalues (including multiplicities) and $c$ complex conjugate pairs of eigenvalues (including multiplicities), then there exists a real, invertible matrix $R \in \mr{n}$ such that 
\begin{align*}
\inv{R} A R = J_{\R} =
\begin{bmatrix} 
\bigoplus_{k=1}^{r} J_{n_k} (\lambda_k) & \\ & \bigoplus_{k = r + 1}^{r + c} C_{n_k} (\lambda_k)  
\end{bmatrix},
\end{align*}
where:
\begin{enumerate}
\item
\begin{align}
C_{k} (\lambda) := 
\begin{bmatrix} 
C(\lambda) & I_2                        			\\
            & C(\lambda) & \ddots 				\\
            &                    & \ddots & I_2 		\\ 
            &                    &            & C(\lambda) 
\end{bmatrix} \in \mr{2k}; \label{Ck_lambda} 	
\end{align}
\item
\begin{align} 
C(\lambda) := 
\begin{bmatrix} 
\Re{(\lambda)} & \Im{(\lambda)} \\ 
-\Im{(\lambda)} & \Re{(\lambda)}  
\end{bmatrix} \in \mr{2}; \label{C_lambda}	
\end{align}
\item $\lambda_1, \dots, \lambda_r$ are the real eigenvalues (including multiplicities) of $A$; and 
\item $\lambda_{r+1}, \bar{\lambda}_{r + 1}, \dots, \lambda_{r + c}, \bar{\lambda}_{r + c}$ are the complex eigenvalues (including multiplicities) of $A$.
\end{enumerate}
\end{thm}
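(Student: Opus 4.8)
The plan is to reduce to the complex Jordan canonical form and then perform an explicit real change of basis on each conjugate pair of Jordan blocks. First I would invoke the complex Jordan form $\inv{Z} A Z = \bigoplus_i \jordan{n_i}{\lambda_i}$. Since $A$ is real, its characteristic and minimal polynomials have real coefficients, so the non-real eigenvalues occur in conjugate pairs; moreover, applying complex conjugation to a Jordan chain for $\lambda$ yields a Jordan chain of the same length for $\bar{\lambda}$, so the Jordan blocks for $\lambda$ and $\bar{\lambda}$ can be matched in pairs of equal size. The blocks $\jordan{n_k}{\lambda_k}$, $k = 1, \dots, r$, attached to the real eigenvalues are already real and are collected unchanged into the leading direct summand $\bigoplus_{k=1}^{r} \jordan{n_k}{\lambda_k}$.

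Next, fix a conjugate pair and one of its Jordan blocks $\jordan{m}{\lambda}$ with $\lambda = a + b\ii$, $b \neq 0$, and let $v_1, \dots, v_m$ be the associated Jordan chain in the columns of $Z$, so that $A v_1 = \lambda v_1$ and $A v_j = \lambda v_j + v_{j-1}$ for $j \geq 2$; then $\bar{v}_1, \dots, \bar{v}_m$ is the Jordan chain for the paired block $\jordan{m}{\bar{\lambda}}$. I would set $x_j := \Re(v_j)$ and $y_j := \Im(v_j)$. Since the passage from $\{v_j, \bar{v}_j\}$ to $\{x_j, y_j\}$ is an invertible linear map, these $2m$ real vectors are linearly independent over $\R$ and span the same subspace, namely the real points of $\spn_{\C}\{v_j, \bar{v}_j\}$, which is $A$-invariant. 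Separating real and imaginary parts in $A v_j = (a + b\ii)(x_j + \ii y_j) + (x_{j-1} + \ii y_{j-1})$ gives $A x_j = a x_j - b y_j + x_{j-1}$ and $A y_j = b x_j + a y_j + y_{j-1}$ (with $x_0 = y_0 = 0$); hence, in the ordered real basis $x_1, y_1, x_2, y_2, \dots, x_m, y_m$, the restriction of $A$ to this subspace has matrix exactly $C_m(\lambda)$, with diagonal blocks $C(\lambda)$ as in \eqref{C_lambda} and superdiagonal blocks $I_2$ as in \eqref{Ck_lambda}.

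Finally I would assemble the real invertible matrix $R$ whose columns are, in order, the Jordan-chain vectors of the real eigenvalues, followed, for each conjugate pair (in the correct block order), by the vectors $x_1, y_1, x_2, y_2, \dots$. Because the subspaces attached to distinct Jordan blocks are independent and together span $\C^n$ — equivalently, their real forms span $\R^n$ — the matrix $R$ is real and nonsingular, and $\inv{R} A R = J_{\R}$ has the asserted block-diagonal shape. The only real work here is bookkeeping: tracking the interleaving order $x_1, y_1, x_2, y_2, \dots$ so that the superdiagonal $I_2$ blocks land in the right place, and verifying the linear-independence claim, which reduces to the facts that a real vector is $\R$-linearly independent from a set of real vectors precisely when it is $\C$-linearly independent from them, and that $\{x_j, y_j\}$ is the image of the $\C$-independent set $\{v_j, \bar{v}_j\}$ under an invertible transformation. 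No genuine obstacle arises; this is the classical derivation, and it may alternatively simply be cited from \cite[Section 3.4]{hj1990} or \cite[Section 6.7]{lt1985}.
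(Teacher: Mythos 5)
The paper does not prove this theorem; it states it as a well-known fact and cites \cite[Section 3.4]{hj1990} and \cite[Section 6.7]{lt1985}, which is exactly the option you yourself note at the end. Your derivation is the standard one found in those references and is correct: conjugation carries a Jordan chain for $\lambda$ to one for $\bar\lambda$ so the non-real blocks pair off, and passing to $x_j = \Re(v_j)$, $y_j = \Im(v_j)$ with the interleaved ordering $x_1, y_1, \ldots, x_m, y_m$ indeed produces the block $C_m(\lambda)$ with diagonal $2\times 2$ blocks $\begin{bmatrix} a & b \\ -b & a \end{bmatrix}$ and superdiagonal $I_2$ blocks, matching \eqref{Ck_lambda} and \eqref{C_lambda}.
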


\begin{lem} \label{rjcf_lem} 
Let $\lambda \in \C$ and suppose $C_{k} (\lambda)$ and $C(\lambda)$ are defined as in \eqref{Ck_lambda} and \eqref{C_lambda}, respectively. If $S_k :=\left( \bigoplus_{i=1}^k S \right) \in \mr{2k}$, where $S := \begin{bmatrix} -\ii & -\ii \\ 1 & -1 \end{bmatrix}$, then 
\begin{align} 
\inv{S_k} C_k (\lambda) S_k = D_k (\lambda) := \begin{bmatrix} D(\lambda) & I_2 \\ & D(\lambda) & \ddots \\  & & \ddots & I_2 \\ & & & D(\lambda)\end{bmatrix} \in \mc{2k}, \label{Dk_lambda}
\end{align}
where $D(\lambda) := \begin{bmatrix} \lambda & 0 \\ 0 & \bar{\lambda} \end{bmatrix}$. 
\end{lem}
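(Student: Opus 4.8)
The plan is to verify the similarity $\inv{S_k} C_k(\lambda) S_k = D_k(\lambda)$ by a direct block computation, exploiting the fact that both $C_k(\lambda)$ and $S_k$ are built up from $2\times 2$ blocks arranged in a block-bidiagonal pattern. First I would observe that $S_k = \bigoplus_{i=1}^k S$ is block-diagonal, so conjugation by $S_k$ acts blockwise on the $2\times 2$ block structure: the $(a,b)$ block of $\inv{S_k} C_k(\lambda) S_k$ is simply $\inv{S}\,[C_k(\lambda)]_{ab}\,S$. Since $C_k(\lambda)$ has $C(\lambda)$ in each diagonal block, $I_2$ in each superdiagonal block, and $0$ elsewhere, it suffices to check the two identities
\begin{align*}
\inv{S} C(\lambda) S = D(\lambda) \qquad \text{and} \qquad \inv{S} I_2 S = I_2.
\end{align*}
The second identity is immediate. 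Everything therefore reduces to the single $2\times2$ eigen-decomposition $\inv{S}C(\lambda)S = D(\lambda)$.

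Next I would compute $\inv{S}$ explicitly. With $S = \begin{bmatrix} -\ii & -\ii \\ 1 & -1 \end{bmatrix}$, we have $\det S = (-\ii)(-1) - (-\ii)(1) = \ii + \ii = 2\ii$, so
\begin{align*}
\inv{S} = \frac{1}{2\ii} \begin{bmatrix} -1 & \ii \\ -1 & -\ii \end{bmatrix}.
\end{align*}
Then I would multiply out $C(\lambda) S$ using $C(\lambda) = \begin{bmatrix} \Re(\lambda) & \Im(\lambda) \\ -\Im(\lambda) & \Re(\lambda) \end{bmatrix}$, and observe that the columns of $S$ are (up to scaling) the eigenvectors of $C(\lambda)$: indeed $C(\lambda)\begin{bmatrix} -\ii \\ 1 \end{bmatrix} = \begin{bmatrix} -\ii\Re(\lambda) + \Im(\lambda) \\ \ii\Im(\lambda) + \Re(\lambda) \end{bmatrix} = (\Re(\lambda) + \ii\Im(\lambda))\begin{bmatrix} -\ii \\ 1 \end{bmatrix} = \lambda \begin{bmatrix} -\ii \\ 1\end{bmatrix}$, and similarly $C(\lambda)\begin{bmatrix} -\ii \\ -1 \end{bmatrix} = \bar\lambda\begin{bmatrix} -\ii \\ -1 \end{bmatrix}$. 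Hence $C(\lambda) S = S \begin{bmatrix} \lambda & 0 \\ 0 & \bar\lambda \end{bmatrix} = S D(\lambda)$, which gives $\inv{S} C(\lambda) S = D(\lambda)$ after left-multiplying by $\inv{S}$.

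Finally, I would reassemble the blocks: since conjugation by the block-diagonal $S_k$ sends each diagonal block $C(\lambda) \mapsto D(\lambda)$ and each superdiagonal block $I_2 \mapsto I_2$, leaving the zero blocks zero, the result is exactly the block-bidiagonal matrix $D_k(\lambda)$ displayed in \eqref{Dk_lambda}. I do not expect any real obstacle here; the only point requiring a little care is bookkeeping the block indices to confirm that the superdiagonal $I_2$ blocks of $C_k(\lambda)$ land in the superdiagonal position of $D_k(\lambda)$ after conjugation — but this is automatic because $S_k$ respects the partition into $2\times2$ blocks. The computation is entirely routine once the reduction to the $2\times2$ case is made explicit.
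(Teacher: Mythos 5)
Your proof is correct, and it is somewhat cleaner than the one in the paper. The paper proves the lemma by induction on $k$: it verifies the base case $k=1$ by computing $\inv{S}C(\lambda)S = D(\lambda)$, then in the inductive step partitions $S_k$, $C_k(\lambda)$, and $\inv{S_k}$ into a $2(k-1) \times 2(k-1)$ corner plus a final $2\times2$ block and appeals to the induction hypothesis. You avoid the induction entirely by observing that $S_k = \bigoplus_{i=1}^k S$ is block-diagonal with equal $2\times2$ blocks, so conjugation by $S_k$ acts blockwise: the $(a,b)$ block of $\inv{S_k}C_k(\lambda)S_k$ is $\inv{S}[C_k(\lambda)]_{ab}S$. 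This immediately reduces the whole lemma to the two scalar-block identities $\inv{S}C(\lambda)S = D(\lambda)$ and $\inv{S}I_2S = I_2$, the first of which you establish (in a nice touch) by exhibiting the columns of $S$ as eigenvectors of $C(\lambda)$ rather than by brute-force matrix multiplication. Both arguments rest on the same $2\times2$ computation; yours simply packages the extension to general $k$ as a one-line structural remark instead of an induction with nontrivial bookkeeping (the paper must verify, e.g., that $\inv{S_{k-1}}YS = Y$ for its rectangular block $Y$). Your version buys brevity and transparency; the paper's buys nothing extra here, and indeed the blockwise observation is the more illuminating way to see why the result holds.
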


\begin{proof}[\sc Proof] 
Proceed by induction on $k$, the number of $2\times2$-blocks; when $k=1$ one readily obtains
\begin{align*}
\inv{S} C(\lambda) S = \frac{1}{2} \begin{bmatrix} \ii & 1 \\ \ii & -1 \end{bmatrix} \begin{bmatrix} \Re{(\lambda)} & \Im{(\lambda)} \\ -\Im{(\lambda)} & \Re{(\lambda)}  \end{bmatrix} \begin{bmatrix} -\ii & -\ii \\ 1 & -1 \end{bmatrix} = D(\lambda).
\end{align*}
Now assume the assertion holds for all matrices of the form \eqref{Ck_lambda} of dimension $2(k-1)$. Note that the matrices in the product $\inv{S_k} C_k (\lambda) S_k$ can be partitioned as
\begin{align*} 
\begin{bmatrix} \inv{S_{k-1}} & Z \\ Z^T & \inv{S} \end{bmatrix}
\begin{bmatrix} C_{k-1} (\lambda) & Y \\ Z^T & C(\lambda) \end{bmatrix} 
\begin{bmatrix} S_{k-1} & Z \\ Z & S \end{bmatrix},
\end{align*}
where $Z \in \mr{2(k-1),2}$ is a rectangular zero matrix, $Y= \begin{bmatrix} Z_2 \\ Z_2 \\ \vdots \\ Z_2 \\ I_2 \end{bmatrix} \in \mr{2(k-1),2}$, and $Z_2$ is the $2\times2$ zero matrix. With the above partition in mind, and following the induction hypothesis, we obtain 
\begin{align*}
\inv{S} C_k (\lambda) S
&= \begin{bmatrix} D_{k-1} (\lambda) & \inv{S_{k-1}} Y S \\ Z^T & \inv{S} C(\lambda) S \end{bmatrix}
\end{align*}
 and note that $\inv{S_{k-1}} Y S = Y$ and $\inv{S} C(\lambda) S = D(\lambda)$.
\end{proof}

\begin{lem} \label{rjcf_lem_2} 
Let $\lambda \in \C$ and suppose $D_k (\lambda)$ and $D(\lambda)$ are defined as in \hyperref[rjcf_lem]{\rm Lemma \ref*{rjcf_lem}}. If $P_k$ is the permutation matrix given by 
\begin{align}
P_k = \begin{bmatrix} e_1 & e_3 & \dots & e_{2k-1} & e_2 & e_4 & \dots & e_{2k} \end{bmatrix} \in \mr{2k}, \label{Perm_P_k}
\end{align} 
where $e_i$ denotes the canonical basis vector in $\R^n$ of appropriate dimension, then 
\begin{align*}
P_k^T D_k (\lambda) P_k = 
\jordan{k}{\lambda} \oplus \jordan{k}{\bar{\lambda}}. 	
\end{align*}
\end{lem}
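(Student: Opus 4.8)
The plan is to exploit the fact that conjugation by the permutation matrix $P_k$ merely reorders the standard basis of $\C^{2k}$ so that the $k$ odd-indexed basis vectors come first and the $k$ even-indexed ones come last, together with the observation that $D_k(\lambda)$ never couples an odd-indexed coordinate to an even-indexed one. Concretely, I would first read off from the block form \eqref{Dk_lambda} the action of $D_k(\lambda)$ on the standard basis $e_1,\dots,e_{2k}$ of $\C^{2k}$: since the diagonal blocks are $D(\lambda)=\diag{\lambda,\bar\lambda}$ and the super-diagonal blocks are $I_2$, one gets $D_k(\lambda)e_1=\lambda e_1$, $D_k(\lambda)e_2=\bar\lambda e_2$, and, for $i=1,\dots,k-1$,
\begin{align*}
D_k(\lambda)e_{2i+1}=\lambda e_{2i+1}+e_{2i-1}, \qquad D_k(\lambda)e_{2i+2}=\bar\lambda e_{2i+2}+e_{2i}.
\end{align*}
In particular $\spn\{e_1,e_3,\dots,e_{2k-1}\}$ and $\spn\{e_2,e_4,\dots,e_{2k}\}$ are each $D_k(\lambda)$-invariant.

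Next I would invoke the defining property of $P_k$ in \eqref{Perm_P_k}: its $j$th column is $e_{2j-1}$ for $1\le j\le k$ and its $(k+j)$th column is $e_{2j}$ for $1\le j\le k$; equivalently, writing $f_1,\dots,f_{2k}$ for the standard basis, $P_kf_j=e_{2j-1}$ and $P_kf_{k+j}=e_{2j}$. Then for $2\le j\le k$,
\begin{align*}
P_k^T D_k(\lambda)P_k f_j = P_k^T D_k(\lambda)e_{2j-1}=P_k^T(\lambda e_{2j-1}+e_{2j-3})=\lambda f_j+f_{j-1},
\end{align*}
while $P_k^T D_k(\lambda)P_k f_1=\lambda f_1$; this says that the leading $k\times k$ principal submatrix of $P_k^T D_k(\lambda)P_k$ is exactly $\jordan{k}{\lambda}$, with no coupling to the remaining coordinates. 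The identical computation applied to the even-indexed vectors shows that the trailing $k\times k$ block is $\jordan{k}{\bar\lambda}$ and is likewise decoupled. Combining the two blocks yields $P_k^T D_k(\lambda)P_k=\jordan{k}{\lambda}\oplus\jordan{k}{\bar\lambda}$.

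Alternatively one could run the same induction on $k$ that was used in \hyperref[rjcf_lem]{Lemma \ref*{rjcf_lem}}, peeling off the last $2\times2$ block, but the direct bookkeeping above appears shorter. I do not expect a genuine obstacle here: the only point requiring care is the index arithmetic and the orientation of the super-diagonal $I_2$ blocks, namely that the $(i,i+1)$ block of $D_k(\lambda)$ sends $e_{2i+1}\mapsto e_{2i-1}$ and $e_{2i+2}\mapsto e_{2i}$ (and not the reverse), so that the induced super-diagonal entries of the conjugated matrix land in the correct place.
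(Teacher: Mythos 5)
Your proof is correct, and it takes a genuinely different route from the paper's. The paper proves the lemma by induction on $k$: it writes $D_k(\lambda)$ in a bordered form whose central $(2k-2)\times(2k-2)$ block is $D_{k-1}(\bar\lambda)$, applies the inductive hypothesis via an inner permutation $\hat P$, cleans up with a further block-swap permutation $\bar P$, and then verifies that $\hat P\bar P=P_k$. Your argument instead reads off directly from \eqref{Dk_lambda} that the odd-indexed coordinates and the even-indexed coordinates span $D_k(\lambda)$-invariant subspaces, and that conjugating by $P_k$ simply reorders the basis so those two invariant subspaces become the leading and trailing $k\times k$ blocks, on each of which the induced map is a single Jordan block. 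The direct computation is shorter, avoids the nested-permutation bookkeeping (and the attendant opportunities for index slips in the bordered decomposition), and makes the underlying reason for the identity --- the odd/even decoupling of $D_k(\lambda)$ --- transparent at a glance. The paper's inductive argument, by contrast, deliberately mirrors the structure of the proof of the preceding Lemma~\ref{rjcf_lem}, so the two proofs read as a matched pair. Both are valid; either would serve.
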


\begin{proof}[\sc Proof] 
Proceed by induction on $k$: the base-case when $k=1$ is trivial, so we assume the assertion holds for matrices of the form $\eqref{Dk_lambda}$ of dimension $2(k-1)$. If $z_n$ denotes the $n \times 1$ zero vector, then
\begin{align*}
D(\lambda) = 
\begin{bmatrix} 
\lambda & e_2^T & 0 					\\ 
z_{2(k-1)} & D_{2(k-1)} (\bar{\lambda}) & e_{2(k-1)}	\\
0 & z^T_{2(k-1)} & \bar{\lambda}
\end{bmatrix},
\end{align*}
and if $\hat{P}$ is the permutation matrix defined by 
\begin{align*} 
\hat{P} := 
\begin{bmatrix}
1 & z_{2(k-1)}^T & 0 			\\
z_{2(k-1)} & P_{2(k-1)} & z_{2(k-1)}	\\
0 & z_{2(k-1)}^T & 1
\end{bmatrix} \in \mr{2k},
\end{align*}
then, following the induction-hypothesis, 
\begin{align}
\hat{P}^T D(\lambda) \hat{P} &= 
\begin{bmatrix}
\lambda & z_{k-1}^T                              & e_1^T  & 0 				\\
z_{k-1}  & \jordan{k-1}{\bar{\lambda}} & Z_{k-1}                               & z_{k-1} 	\\
z_{k-1}  & Z_{k-1}                                   & \jordan{k-1}{\lambda}       & e_{k-1} 	\\
0            & z_{k-1}^T                               & z_{k-1}^T                          & \lambda 
\end{bmatrix}. \label{27}
\end{align}
A permutation-similarity by the matrix $\bar{P}$ defined by 
\begin{align*}
\bar{P} :=
\begin{bmatrix}
1  &              &             & 	\\
    &              & I_{k-1} & 	\\
    & I_{k-1} &              & 	\\
    &              &             & 1
\end{bmatrix} \in \mr{2k}
\end{align*}
brings the matrix in the right-hand-side of $\eqref{27}$ to the desired form. The proof is completed by noting that 
\begin{align*}
\hat{P} \bar{P}
&= 
\begin{bmatrix}
e_1 & e_2 & \dots & e_{2(k-1)} & e_3 & \dots & e_{2k-1} & e_{2k}
\end{bmatrix}
\begin{bmatrix}
1 &             &             & 	\\
   &             & I_{k-1} & 	\\
   & I_{k-1} &             & 	\\
   &             &              & 1
\end{bmatrix}
= P_k,
\end{align*}
since right-hand multiplication by $\bar{P}$ permutes columns 2 through $k$ with columns $k+1$ through $2k-1$.
\end{proof}

\begin{cor} \label{rjcf_cor} 
Let $\lambda \in \C$, $\lambda \neq 0$, and let $f$ be a function defined on the spectrum of $\jordan{k}{\lambda} \oplus \jordan{k}{\bar{\lambda}}$.  For $j$ a nonnegative integer, let $f^{(j)}_\lambda$ denote $f^{(j)}(\lambda)$. If $C_k (\lambda)$ and $C(\lambda)$ are defined as in \eqref{Ck_lambda} and \eqref{C_lambda}, respectively, then 
\begin{align*} 
f ( C_{k} (\lambda) ) = 
\begin{bmatrix} 
C(f_\lambda) & C(f'_\lambda) & \dots & C \left( \frac{f^{(k-1)}_\lambda}{(k-1)!} \right)	\\
& C(f_\lambda) & \ddots & \vdots 									\\
& & \ddots & C(f'_\lambda) 										\\
& & & C(f_\lambda)
\end{bmatrix} \in \mr{2k}
\end{align*}
if and only if $\overline{f_\lambda^{(j)}} = f_{\bar{\lambda}}^{(j)}$.
\end{cor}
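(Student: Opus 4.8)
The plan is to transport the computation to the complex Jordan form of $C_k(\lambda)$ by means of Lemmas~\ref{rjcf_lem} and \ref{rjcf_lem_2}, evaluate the matrix function there, and then push the result back through the two similarities, both of which act block-by-block.

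By Lemma~\ref{rjcf_lem} we have $C_k(\lambda) = S_k D_k(\lambda) \inv{S_k}$, and by Lemma~\ref{rjcf_lem_2} we have $D_k(\lambda) = P_k\big(\jordan{k}{\lambda}\oplus\jordan{k}{\bar\lambda}\big)P_k^T$; hence $C_k(\lambda)$ is similar to $\jordan{k}{\lambda}\oplus\jordan{k}{\bar\lambda}$, so the two matrices have the same eigenvalues and indices, and $f$ is defined on the spectrum of one precisely when it is defined on the spectrum of the other. Since a matrix function does not depend on the particular similarity used to reduce a matrix to Jordan form, this gives
\[
f(C_k(\lambda)) = S_k\, f(D_k(\lambda))\,\inv{S_k}, \qquad f(D_k(\lambda)) = P_k\big(f(\jordan{k}{\lambda})\oplus f(\jordan{k}{\bar\lambda})\big)P_k^T .
\]
By \eqref{fjb} and \eqref{Perm_P_k}, conjugation by $P_k$ simply interleaves the rows and columns of $f(\jordan{k}{\lambda})$ with those of $f(\jordan{k}{\bar\lambda})$ --- exactly the bookkeeping carried out in the proof of Lemma~\ref{rjcf_lem_2} --- so, regarded as a $k\times k$ array of $2\times2$ blocks, $f(D_k(\lambda))$ is block upper triangular with $(a,b)$-block equal to $\diag{f^{(b-a)}(\lambda)/(b-a)!,\ f^{(b-a)}(\bar\lambda)/(b-a)!}$ for $1\le a\le b\le k$.

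Because $S_k = \bigoplus_{i=1}^k S$ is block diagonal, conjugation by $S_k$ acts block-entrywise; thus the $(a,b)$-block of $f(C_k(\lambda))$ is $S\,\diag{\mu,\nu}\,\inv{S}$ with $\mu = f^{(b-a)}(\lambda)/(b-a)!$ and $\nu = f^{(b-a)}(\bar\lambda)/(b-a)!$. A direct $2\times2$ computation (essentially the $k=1$ case of Lemma~\ref{rjcf_lem}) yields, for all $\mu,\nu\in\C$,
\[
S\,\diag{\mu,\nu}\,\inv{S} = \begin{bmatrix} (\mu+\nu)/2 & \ii(\nu-\mu)/2 \\ \ii(\mu-\nu)/2 & (\mu+\nu)/2 \end{bmatrix},
\]
and comparison with \eqref{C_lambda} shows this equals $C(\mu)$ if and only if $\nu = \bar\mu$. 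Consequently the $(a,b)$-block of $f(C_k(\lambda))$ equals $C\big(f^{(b-a)}_\lambda/(b-a)!\big)$ if and only if $f^{(b-a)}(\bar\lambda) = \overline{f^{(b-a)}(\lambda)}$. Letting $b-a$ range over $\{0,1,\dots,k-1\}$ --- the only derivative orders that appear, $k$ being the index of $\lambda$ and of $\bar\lambda$ --- gives the asserted equivalence; for the ``only if'' direction it already suffices to read off the first block row of $f(C_k(\lambda))$.

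I do not expect a substantive obstacle: the only steps needing care are the permutation bookkeeping for $P_k$ (identical to that in the proof of Lemma~\ref{rjcf_lem_2}) and the elementary $2\times2$ conjugation identity above, with everything else following at once. The hypothesis $\lambda\neq0$ plays no role beyond what is already contained in the assumption that $f$ is defined on the spectrum; it is retained only for consistency with the $p$th-root setting.
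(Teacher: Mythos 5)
Your proposal is correct and follows essentially the same route as the paper's proof: conjugate $C_k(\lambda)$ to $\jordan{k}{\lambda}\oplus\jordan{k}{\bar\lambda}$ via Lemmas~\ref{rjcf_lem} and \ref{rjcf_lem_2}, apply $f$ using invariance of matrix functions under similarity and direct sums, and reduce the question to the elementary $2\times2$ identity $S\diag{\mu,\nu}\inv{S}=C(\mu)\iff\nu=\bar\mu$. If anything, your version is slightly tighter in presentation: you compute $f(C_k(\lambda))$ blockwise with no hypothesis on $f$ and read off both directions of the equivalence at once, whereas the paper first derives the claimed form under the assumption $\overline{f^{(j)}_\lambda}=f^{(j)}_{\bar\lambda}$ and then appeals to the $2\times2$ computation separately for the converse.
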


\begin{proof}[\sc Proof] 
Following Lemmas \hyperref[rjcf_lem]{\ref*{rjcf_lem}} and \hyperref[rjcf_lem_2]{\ref*{rjcf_lem_2}}, 
\begin{align}
P_k^T \inv{S_k} C_k (\lambda) S_k P_k = \jordan{k}{\lambda} \oplus \jordan{k}{\bar{\lambda}}. \label{Cor_Eg} 
\end{align} 
Since $f(A) = f(\inv{X} A X)$ (\cite[Theorem 1.13(c)]{h2008}), $f(A \oplus B) = f(A) \oplus f(B)$ \cite[Theorem 1.13(g)]{h2008}, and $\overline{f_\lambda^{(j)}} = f_{\bar{\lambda}}^{(j)}$ for all $j$, applying $f$ to \eqref{Cor_Eg} yields   
\begin{align*}
P_k^T \inv{S_k} f( C_k (\lambda)) S_k P_k 
= f(\jordan{k}{\lambda}) \oplus f(\jordan{k}{\bar{\lambda}}) 		
= f(\jordan{k}{\lambda}) \oplus \overline{f(\jordan{k}{\lambda})}.	
\end{align*}
Hence, 
\begin{align*}
\inv{S_k} f( C_k (\lambda)) S_k 
&= P_k \left[ f(\jordan{k}{\lambda}) \oplus \overline{f(\jordan{k}{\lambda})} \right] P_k^T	\\
&= 
\begin{bmatrix} 
D(f_\lambda) & D(f'_\lambda) & \dots & D \left( \frac{f^{(k-1)}_\lambda}{(k-1)!} \right)  	\\
		 & D(f_\lambda) & \ddots & \vdots  							\\
		 &                       & \ddots & D(f'_\lambda) 						\\
 		 &  		     &            & D(f_\lambda)
\end{bmatrix}
\end{align*}
and
\begin{align*}
f( C_k (\lambda))
&= 
S_k
\begin{bmatrix} 
D(f_\lambda) & D(f'_\lambda) & \dots & D \left( \frac{f^{(k-1)}_\lambda}{(k-1)!} \right)  	\\
		 & D(f_\lambda) & \ddots & \vdots  							\\
		 &                       & \ddots & D(f'_\lambda) 						\\
 		 &  		     &            & D(f_\lambda)
\end{bmatrix} \inv{S_k}											\\
&= 
\begin{bmatrix} 
C(f_\lambda) & C(f'_\lambda) & \dots   & C \left( \frac{f^{(k-1)}_\lambda}{(k-1)!} \right)	\\
                      & C(f_\lambda)  & \ddots & \vdots 							\\
                      &                        & \ddots & C(f'_\lambda) 						\\
                      &                        &            & C(f_\lambda)
\end{bmatrix}. 
\end{align*}
The converse follows from noting that, for $\mu$, $\nu \in \C$, the product
\begin{align*}
S \begin{bmatrix} \mu & 0 \\ 0 & \nu \end{bmatrix} \inv{S} 
&=  \begin{bmatrix} -\ii & -\ii \\ 1 & -1 \end{bmatrix} \begin{bmatrix} \mu & 0 \\ 0 & \nu  \end{bmatrix} \frac{1}{2} \begin{bmatrix} \ii & 1 \\ \ii & -1 \end{bmatrix}					\\
&= \frac{1}{2} \begin{bmatrix} -\ii \mu & -\ii \nu \\ \mu & -\nu \end{bmatrix} \begin{bmatrix} \ii & 1 \\ \ii & -1 \end{bmatrix}	\\
&= \frac{1}{2} \begin{bmatrix} \mu + \nu & \ii(\nu - \mu) \\ i(\mu - \nu) & \mu + \nu \end{bmatrix}
\end{align*}
is real if and only if $\bar{\nu} = \mu$.

Moreover, from our analysis, it also follows that, in general,  
\begin{align*} 
f(C_{k} (\lambda)) = 
\begin{bmatrix} 
f(C_\lambda) & f'(C_\lambda) & \dots   & \frac{f^{(k-1)}(C_\lambda)}{(k-1)!}  	\\
		 & f(C_\lambda) & \ddots & \vdots 					\\
		 & 		     & \ddots & f'(C_\lambda)				\\
		 & 		     & 	         & f(C_\lambda)
\end{bmatrix} \in \mc{2k},
\end{align*}
which bears a striking resemblance to \eqref{fjb}. 
\end{proof}

\begin{cor} \label{rjcf_cor_2}
If $\lambda \in \C$, $\Im{(\lambda)} \neq 0$ and
\begin{align}
F_j (C_k (\lambda)) := S_k P_k \begin{bmatrix} f_{j_1} (\jordan{k}{\lambda}) & 0 \\ 0 & f_{j_2} (\jordan{k}{\bar{\lambda}}) \end{bmatrix} P_k^T \inv{S_k} \in \mc{2k}, \label{bigF}
\end{align}
where $j = \begin{pmatrix} j_1, j_2 \end{pmatrix}$ and $j_1$, $j_2 \in \{ 0, 1, \dots, p-1 \}$, 
then 
\begin{align*}
F_j (C_k (\lambda)) = 
\begin{bmatrix} 
C(f_{j_1} (\lambda)) & C(f'_{j_1} (\lambda)) & \dots   & C \left( \frac{f^{(k-1)}_{j_1} (\lambda)}{(k-1)!}  \right)	\\
                                  & C(f_{j_1} (\lambda))  & \ddots & \vdots 								\\
                      	  &                        	        & \ddots & C(f'_{j_1} (\lambda))						\\
                      	  &                        	        &            & C(f_{j_1} (\lambda))
\end{bmatrix} \in \mr{2k}
\end{align*}
if and only if $j_1 = j_2 = 0$ or $j_1 = j_2 - p$.
\end{cor}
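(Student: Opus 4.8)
The plan is to identify $F_j(C_k(\lambda))$ with $f(C_k(\lambda))$ for a suitable single function $f$, and then to extract the conclusion from Corollary~\ref{rjcf_cor} and Lemma~\ref{lem_conj}. Since $\Im(\lambda) \neq 0$, the numbers $\lambda$ and $\bar{\lambda}$ are distinct, so there is a polynomial $f$ that is defined on the spectrum of $\jordan{k}{\lambda} \oplus \jordan{k}{\bar{\lambda}}$ and satisfies $f^{(i)}(\lambda) = f_{j_1}^{(i)}(\lambda)$ and $f^{(i)}(\bar{\lambda}) = f_{j_2}^{(i)}(\bar{\lambda})$ for $i = 0,1,\dots,k-1$. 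For this $f$ one has $f(\jordan{k}{\lambda}) = f_{j_1}(\jordan{k}{\lambda})$ and $f(\jordan{k}{\bar{\lambda}}) = f_{j_2}(\jordan{k}{\bar{\lambda}})$, so, using \eqref{Cor_Eg} together with $f(\inv{X} A X) = \inv{X} f(A) X$ and $f(A \oplus B) = f(A) \oplus f(B)$ exactly as in the proof of Corollary~\ref{rjcf_cor},
\[
F_j(C_k(\lambda)) = S_k P_k \left( f(\jordan{k}{\lambda}) \oplus f(\jordan{k}{\bar{\lambda}}) \right) P_k^T \inv{S_k} = f(C_k(\lambda)).
\]

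Next I would apply Corollary~\ref{rjcf_cor} to this $f$: it guarantees that $f(C_k(\lambda))$ equals the displayed block upper-triangular Toeplitz matrix, whose $2 \times 2$ blocks are $C\!\left( f^{(i)}(\lambda)/i! \right) = C\!\left( f_{j_1}^{(i)}(\lambda)/i! \right)$, if and only if $\overline{f^{(i)}(\lambda)} = f^{(i)}(\bar{\lambda})$ for $i = 0,1,\dots,k-1$; that is, if and only if $\overline{f_{j_1}^{(i)}(\lambda)} = f_{j_2}^{(i)}(\bar{\lambda})$ for $i = 0,1,\dots,k-1$. Here the ``only if'' direction rests on the computation carried out in the proof of Corollary~\ref{rjcf_cor}, namely that $S \diag{\mu,\nu} \inv{S}$ is real --- and then equal to $C(\mu)$ --- precisely when $\bar{\nu} = \mu$, applied to each $2 \times 2$ block $\tfrac{1}{(b-a)!}\, S \diag{f^{(b-a)}(\lambda),\, f^{(b-a)}(\bar{\lambda})} \inv{S}$ of $f(C_k(\lambda))$.

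Finally I would invoke Lemma~\ref{lem_conj} with $z = \lambda$: for any fixed derivative order $i$, the identity $f_{j_1}^{(i)}(\lambda) = \overline{f_{j_2}^{(i)}(\bar{\lambda})}$ holds if and only if $j_1 + j_2 \equiv 0 \imod{p}$, and this congruence does not depend on $i$ --- indeed, by \eqref{pth_rt_der_fx}, $f_{j_1}^{(i)}(\lambda)$ and $\overline{f_{j_2}^{(i)}(\bar{\lambda})}$ always have the same modulus, while the difference of their arguments equals $2\pi(j_1 + j_2)/p$ regardless of $i$. Hence the whole list of conjugation conditions above --- equivalently, any single one of them --- holds if and only if $j_1 + j_2 \equiv 0 \imod{p}$, which, by the final remark in the proof of Lemma~\ref{lem_conj}, is exactly the condition asserted in the corollary.

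I expect the only step calling for a moment's care to be the first reduction, namely that the prescribed branch values at $\lambda$ and at $\bar{\lambda}$ are jointly realized by a single function defined on the spectrum; this is immediate here because $\Im(\lambda) \neq 0$ forces the two Jordan blocks $\jordan{k}{\lambda}$ and $\jordan{k}{\bar{\lambda}}$ to impose interpolation conditions at distinct points, so the remaining steps are routine bookkeeping on top of Corollary~\ref{rjcf_cor} and Lemma~\ref{lem_conj}.
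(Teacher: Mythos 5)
Your argument is correct and follows exactly the route the paper intends: the published proof is the single line ``Follows from Lemma~\ref{lem_conj} and Corollary~\ref{rjcf_cor},'' and you have supplied precisely the details that line elides. The one step the paper takes for granted, and which you rightly identify as the only nontrivial reduction, is that $F_j(C_k(\lambda))$ can be written as $f(C_k(\lambda))$ for a \emph{single} function $f$ so that Corollary~\ref{rjcf_cor} applies; your Hermite-interpolation observation (possible exactly because $\Im(\lambda)\neq 0$ makes $\lambda$ and $\bar\lambda$ distinct interpolation nodes) handles this cleanly. One small remark on the statement you are proving rather than on your proof: Lemma~\ref{lem_conj}'s closing remark gives the condition as $j_1=j_2=0$ or $j_1=p-j_2$, which is what your argument actually delivers; the corollary's ``$j_1=j_2-p$'' appears to be a sign typo, since for $j_1,j_2\in\{0,\dots,p-1\}$ that equality could only hold with $j_1<0$.
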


\begin{proof}[\sc Proof]
Follows from \hyperref[lem_conj]{Lemma \ref*{lem_conj}} and \hyperref[rjcf_cor]{Corollary \ref*{rjcf_cor}}.
\end{proof}

\begin{cor} \label{rjcf_cor_3}
If $\lambda = r \exp{(i \pi)} \in \C$, where $r > 0$, and $F_j$ is defined as in \eqref{bigF}, then 
\begin{align*}
F_j (C_k (\lambda)) = 
\begin{bmatrix} 
C(f_{j_1} (\lambda)) & C(f'_{j_1} (\lambda)) & \dots   & C \left( \frac{f^{(k-1)}_{j_1} (\lambda)}{(k-1)!}  \right)	\\
                                  & C(f_{j_1} (\lambda))  & \ddots & \vdots 								\\
                      	  &                        	        & \ddots & C(f'_{j_1} (\lambda))						\\
                      	  &                        	        &            & C(f_{j_1} (\lambda))
\end{bmatrix} \in \mr{2k}
\end{align*}
if and only if $j_1 + j_2 = p - 1$.
\end{cor}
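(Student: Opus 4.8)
\emph{Proof proposal.} The plan is to run the argument for Corollary~\ref{rjcf_cor_2}, but with one caveat: since $\lambda=r\exp(\ii\pi)$ is a negative real number, $C_k(\lambda)$ is derogatory (its diagonal blocks $\jordan{k}{\lambda}$ and $\jordan{k}{\bar\lambda}$ now carry the \emph{same} eigenvalue), so $F_j(C_k(\lambda))$ need not be a primary root and Corollary~\ref{rjcf_cor} cannot be quoted as a black box; instead I would reuse the explicit block computation from its proof. Directly from \eqref{bigF},
\[
\inv{S_k}\, F_j(C_k(\lambda))\, S_k = P_k \left[ f_{j_1}(\jordan{k}{\lambda}) \oplus f_{j_2}(\jordan{k}{\bar\lambda}) \right] P_k^T,
\]
and unwinding the permutation $P_k$ of \eqref{Perm_P_k} exactly as in the proof of Corollary~\ref{rjcf_cor} exhibits the right-hand side as a block upper triangular, block Toeplitz matrix whose $(i,i')$-block, for $i'\ge i$ and $m:=i'-i$, equals $\diag{f_{j_1}^{(m)}(\lambda)/m!,\ f_{j_2}^{(m)}(\bar\lambda)/m!}$.

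Conjugating back by $S_k=\bigoplus S$ acts blockwise, so $F_j(C_k(\lambda))$ is the block upper triangular, block Toeplitz matrix whose $(i,i')$-block ($i'\ge i$, $m=i'-i$) is
\[
S \begin{bmatrix} f_{j_1}^{(m)}(\lambda)/m! & 0 \\ 0 & f_{j_2}^{(m)}(\bar\lambda)/m! \end{bmatrix} \inv{S}.
\]
By the identity established at the end of the proof of Corollary~\ref{rjcf_cor}, for $\mu,\nu\in\C$ the matrix $S\,\diag{\mu,\nu}\,\inv{S}$ is real if and only if $\bar\nu=\mu$, in which case it equals $C(\mu)$, and is otherwise non-real. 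Since the blocks belonging to distinct values of $m$ occupy disjoint sets of entries, $F_j(C_k(\lambda))$ lies in $\mr{2k}$ if and only if every one of these blocks is real; in that event each block is $C(f_{j_1}^{(m)}(\lambda)/m!)$, so $F_j(C_k(\lambda))$ is exactly the block Toeplitz matrix displayed in the statement, and by the criterion just recalled this occurs if and only if
\[
\overline{f_{j_1}^{(m)}(\lambda)} = f_{j_2}^{(m)}(\bar\lambda), \qquad m = 0,1,\dots,k-1.
\]

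To finish I would invoke Lemma~\ref{lem_conj2}. Because $\lambda$ is the negative real number $r\exp(\ii\pi)$, its conjugate is the \emph{same} point of $\C$, carried with the \emph{same} argument $\pi$, so the displayed condition reads $f_{j_2}^{(m)}\!\left(r\exp(\ii\pi)\right)=\overline{f_{j_1}^{(m)}\!\left(r\exp(\ii\pi)\right)}$; by Lemma~\ref{lem_conj2} this holds for one value of $m$---equivalently for all of them, since the congruence there does not depend on the derivative order---precisely when $j_1+j_2\equiv-1\imod{p}$, and as $j_1,j_2\in\{0,1,\dots,p-1\}$ force $0\le j_1+j_2\le 2p-2$, the unique admissible value is $j_1+j_2=p-1$, as claimed. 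The step needing most care is the realness bookkeeping around the conjugate eigenvalue: recognizing that here $\lambda$ and $\bar\lambda$ must be represented with argument exactly $\pi$ is precisely what replaces the ``$\equiv0$'' condition of Lemma~\ref{lem_conj} (used for Corollary~\ref{rjcf_cor_2}) by the ``$\equiv-1$'' condition of Lemma~\ref{lem_conj2}; everything else is the mechanical block bookkeeping already carried out for Corollary~\ref{rjcf_cor}.
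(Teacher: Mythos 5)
Your proof is correct and follows essentially the same route the paper intends---combine the block computation behind Corollary~\ref{rjcf_cor} with Lemma~\ref{lem_conj2}---and it has the merit of explicitly flagging that the statement of Corollary~\ref{rjcf_cor} cannot be cited as a black box here: when $\lambda$ is a negative real, $F_j$ with $j_1 \neq j_2$ is not a primary matrix function of $C_k(\lambda)$, so one must instead use the defining equation~\eqref{bigF} to obtain the block form $P_k\bigl[f_{j_1}(\jordan{k}{\lambda}) \oplus f_{j_2}(\jordan{k}{\bar\lambda})\bigr]P_k^T$ directly and then unwind $P_k$ and $S_k$ exactly as in that proof. Your remaining steps---realness of each block $S\,\diag{\mu,\nu}\,\inv{S}$ forcing $\bar\nu=\mu$, the fact that Lemma~\ref{lem_conj2} is uniform in the derivative order so all $k$ block conditions collapse to the single congruence $j_1+j_2\equiv -1 \imod{p}$, and the range constraint $j_1,j_2 \in \{0,\dots,p-1\}$ pinning down $j_1+j_2=p-1$---all match the paper's intent.
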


\begin{proof}[\sc Proof]
Follows from \hyperref[lem_conj2]{Lemma \ref*{lem_conj2}} and \hyperref[rjcf_cor]{Corollary \ref*{rjcf_cor}}.
\end{proof}

The next theorem provides a necessary and sufficient condition for the existence of a real $p$th-root of a real $A$ (see \cite[Theorem 2.3]{hl2011}) and our proof utilizes the real Jordan canonical form.

\begin{thm}[Existence of real $p$th-root] \label{ex_real_root_thm}
A matrix $A \in \mr{n}$ has a real $p$th-root if and only if it satisfies the ascent sequence condition specified in \hyperref[asc_seq_thm]{\rm Theorem \ref*{asc_seq_thm}} and, if $p$ is even, $A$ has an even number of Jordan blocks of each size for every negative eigenvalue. 
\end{thm}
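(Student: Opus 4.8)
The plan is to use the real Jordan canonical form to reduce the existence question to the behavior of $f_j$ (the branches of the $p$th-root) on individual Jordan and real-Jordan blocks, and then to piece these primary roots together. First I would invoke Theorem~\ref{asc_seq_thm} to dispose of the singular part: $A$ has a $p$th-root over $\C$ if and only if the ascent sequence condition holds, and this condition is necessary over $\R$ as well since a real root is in particular a complex root; moreover the ascent sequence condition guarantees that the singular Jordan structure of $A$ can be realized as the $p$th power of a (real) nilpotent-type block, exactly as in the proof of Theorem~\ref{asc_seq_thm}, and one checks that the construction there can be carried out with real entries. So the crux is the \emph{nonsingular} part, where I may freely use Theorem~\ref{thm_class_rts}: every nonsingular complex $A$ has primary $p$th-roots $X_j = Z\bigl(\bigoplus_i f_{j_i}(J_{n_i})\bigr)\inv{Z}$.

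Next I would pass to the real Jordan canonical form via Theorem~\ref{rjcf_thm}, writing $\inv{R}AR = J_\R = \bigl(\bigoplus_{k\le r} J_{n_k}(\lambda_k)\bigr)\oplus\bigl(\bigoplus_{k>r} C_{n_k}(\lambda_k)\bigr)$, and build a root block-by-block. For a complex-conjugate pair, Corollary~\ref{rjcf_cor_2} tells me precisely which choices of branch indices $(j_1,j_2)$ make $F_j(C_k(\lambda))$ real: any $j_1\in\{0,\dots,p-1\}$ works, taking $j_2$ so that $j_1=j_2$ or $j_1=j_2-p$. So complex-conjugate pairs never obstruct the existence of a real root. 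For a \emph{positive} real eigenvalue $\lambda$, the principal branch $f_0$ is real-analytic at $\lambda$, so $f_0(J_{n_k}(\lambda))$ is a real $p$th-root of that block, again no obstruction. The only delicate case is a \emph{negative} real eigenvalue $\lambda = r e^{i\pi}$: here $f_j(J_k(\lambda))$ is never real for a single block (its $(1,1)$ entry $r^{1/p}e^{i(\pi+2\pi j)/p}$ is nonreal for every $j$ when $p>1$), so a real root can only be assembled by pairing \emph{two} blocks of the same size and using Corollary~\ref{rjcf_cor_3}: pairing $J_k(\lambda)$ with $J_k(\lambda)$ and branch indices $j_1,j_2$ with $j_1+j_2 = p-1$. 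When $p$ is odd, $j_1+j_2 = p-1$ is solvable with $j_1=j_2=(p-1)/2$, so a single block of size $k$ can be handled by regarding $C_k(\lambda)$ itself and applying Corollary~\ref{rjcf_cor} with the odd branch; thus for $p$ odd there is no parity obstruction. When $p$ is even, $j_1+j_2=p-1$ forces $j_1\ne j_2$, so blocks of a given size for a negative eigenvalue \emph{must} come in pairs — this is exactly where the ``even number of Jordan blocks of each size for every negative eigenvalue'' condition enters.

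For the direction (existence $\Rightarrow$ condition) I would argue contrapositively: if $p$ is even and some negative eigenvalue $\lambda$ has an odd number of Jordan blocks of some size $k$, I would show no real $X$ with $X^p = A$ exists. The standard way is to note that a real $X$ with $X^p=A$ must have $\operatorname{spec}(X)$ closed under conjugation and $X$ (complexified) must be a $p$th-root of $A$ in the sense classified by Theorem~\ref{thm_class_rts}; analyzing the Jordan structure of $X$ shows the $J_k(\lambda)$-blocks of $A$ are grouped by $X$ into blocks on which $X$ acts as $f_{j_i}$, and reality of $X$ forces these to occur in conjugate-matched pairs with $j + j' \equiv -1 \pmod p$ (Lemma~\ref{lem_conj2}), which since $p$ is even entails $j\ne j'$ and hence an even count — contradiction. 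For the converse (condition $\Rightarrow$ existence) I would simply exhibit the root: handle the singular part by the real ascent-sequence construction, positive real eigenvalues by $f_0$, complex pairs by Corollary~\ref{rjcf_cor_2}, negative eigenvalues with $p$ odd by the odd branch via Corollary~\ref{rjcf_cor}, and negative eigenvalues with $p$ even by pairing the (even number of) equal-size blocks two at a time via Corollary~\ref{rjcf_cor_3}; conjugating back by $R$ gives a real $p$th-root of $A$. I expect the main obstacle to be the necessity argument for the parity condition: one has to rule out \emph{all} real roots, including nonprimary ones arising from the parameterized families $X_j(U)$ in \eqref{nnprim_roots}, and show that allowing a commuting $U$ cannot create a real root when the block count is odd — this requires a careful invariant-subspace / Jordan-structure argument rather than a direct computation.
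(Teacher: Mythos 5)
Your approach matches the paper's: pass to the real Jordan canonical form, split off singular blocks, handle positive real eigenvalues with the principal branch, complex pairs via Corollary~\ref{rjcf_cor_2}, and negative eigenvalues by pairing Jordan blocks into $C_k(\lambda)$-form and invoking Corollary~\ref{rjcf_cor_3} (for $p$ even), with the parity hypothesis entering exactly at the pairing step. Two remarks. First, a small slip in the $p$-odd case: you cannot ``regard $C_k(\lambda)$ itself'' for a \emph{single} Jordan block $J_k(\lambda)$ of a negative real eigenvalue, since $C_k(\lambda)$ has twice the size; the right observation is that for $\lambda = r e^{\ii\pi}$ and $p$ odd the middle branch $f_{(p-1)/2}$ already satisfies $f_{(p-1)/2}^{(j)}(\lambda) \in \R$ for every $j$ (the argument $[\pi(p-1)+\pi(1-jp)]/p = \pi(1-j)$ is a multiple of $\pi$), so $f_{(p-1)/2}(J_k(\lambda))$ is itself a real $p$th-root of that single block and no pairing is needed. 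Second, your caution about necessity is well placed: the paper's own argument that ``the process just described can not produce a real matrix $p$th-root'' only shows that the exhibited construction fails, not that no real root exists; the cleaner argument you gesture at — any real $X$ with $X^p=A$ has spectrum closed under conjugation, a real eigenvalue $\mu$ of $X$ gives $\mu^p\ge 0$ when $p$ is even, so every Jordan chain of $X$ feeding a negative eigenvalue of $A$ has a conjugate partner of the same length, forcing even multiplicities of each block size — is what actually clinches the ``only if'' direction, and it does not require classifying all (primary and nonprimary) roots.
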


\begin{proof}[\sc Proof]
Case 1: $p$ is even. Following \hyperref[rjcf_thm]{Theorem \ref*{rjcf_thm}}, there exists a real, invertible matrix $R$ such that 
\begin{align*}
A = R
\begin{bmatrix} 
J_0 & & &										\\
 & J_+ & & 										\\ 
 & & J_- & 										\\
 & & & C  
\end{bmatrix} \inv{R}
\end{align*}
where $J_0$ collects the singular Jordan blocks; $J_+$ collects the Jordan blocks with positive real eigenvalues; $J_-$ collects Jordan blocks with negative real eigenvalues; and $C$ collects blocks of the form \eqref{Ck_lambda} corresponding to the complex conjugate pairs of eigenvalues of $A$. 

By hypothesis, if $\jordan{k}{\lambda}$ is a submatrix of $J_-$, it must appear an even number of times; for every such pair of blocks, it follows that 
\begin{align*}
P_k [\jordan{k}{\lambda} \oplus \jordan{k}{\lambda}] P_{k}^T = C_k (\lambda),
\end{align*}
where $P_k$ is defined as in \eqref{Perm_P_k}. Thus, there exists a permutation matrix $P$ such that 
\begin{align*}
A = 
R P^T P
\begin{bmatrix} 
J_0 & & &										\\
 & J_+ & & 										\\ 
 & & J_- & 										\\
 & & & C  
\end{bmatrix} P^T P \inv{R} =
\bar{R}
\begin{bmatrix} 
J_0 & & 										\\
 & J_p & 										\\
 & & \bar{C}  
\end{bmatrix} \inv{\bar{R}}, 
\end{align*}
where $\bar{R} = R P^T$, and $\bar{C}$ collects all the blocks of the form $\eqref{Ck_lambda}$. 

Since the ascent sequence condition holds for $A$ it also holds for $J_0$, so $J_0$ has a $p$th-root $W_0$, and $W_0$ can be taken real in view of the construction given in \cite[Section 3]{p2002}; clearly, there exists a real matrix $W_+$ such that $W_+^p = J_+$ and, following Corollaries \hyperref[rjcf_cor_2]{\ref*{rjcf_cor_2}} and \hyperref[rjcf_cor_3]{\ref*{rjcf_cor_3}}, there exists a real matrix $W_c$ such that $W_c^p = \bar{C}$. Hence, the matrix $X = \bar{R} [W_0 \oplus W_+ \oplus W_c] \inv{\bar{R}}$ is a real $p$th-root of $A$.

Conversely, if $A$ satisfies the ascent sequence condition and has an odd number of Jordan blocks corresponding to a negative eigenvalue, then the process just described can not produce a real matrix $p$th-root, as one of the Jordan blocks can not be paired, so that the root of such a block is necessarily complex.  

Case 2: $p$ is odd. Follows similarly to the first case since real roots can be taken for $J_0$, $J_+$, $J_-$, and $C$.
\end{proof}

We now present an analog of \hyperref[thm_class_rts]{Theorem \ref*{thm_class_rts}} for real matrices.

\begin{thm}[Classification of $p$th-roots of nonsingular real matrices] \label{thm_class_rts2} 
Let $F_k$ be defined as in \eqref{bigF}. If $A \in \mr{n}$ is nonsingular, then $A$ has precisely $p^s$ primary $p$th-roots, given by 
\begin{align}
X_j = 
R 
\begin{bmatrix} 
\bigoplus_{k=1}^r f_{j_k} \left( J_{n_k} ( \lambda_k ) \right)  & 0 \\ 
0 & \bigoplus_{k = r + 1}^{r + c} F_{j_k} (C_{n_k} (\lambda_k)) 
\end{bmatrix} 
\inv{R}, \label{prim_roots2}		
\end{align}
where $j = \begin{pmatrix} j_1, \dots, j_r, j_{r+1},\dots,j_{r+c} \end{pmatrix}$, $j_{k} = \left( j_{k_1}, j_{k_2} \right)$ for $k=r+1,\dots,r+c$, and $j_i = j_k$ whenever $\lambda_i = \lambda_k$. 

If $s < t$, then $A$ has additional nonprimary $p$th-roots that form parameterized families of the form
\begin{align}
X_j (U) = 
R U 
\begin{bmatrix} 
\bigoplus_{k=1}^r f_{j_k} \left( J_{n_k} ( \lambda_k ) \right)  & 0	\\ 
0 & \bigoplus_{k = r + 1}^{r + c} F_{j_k} (C_{n_k} (\lambda_k)) 
\end{bmatrix} 
\inv{U} \inv{R}, \label{nnprim_roots2} 
\end{align}
where $U$ is an arbitrary nonsingular matrix that commutes with $J_\R$, and for each $j$ there exist $i$ and $k$, depending on $j$, such that $\lambda_i = \lambda_k$ while $j_i \neq j_k$.
\end{thm}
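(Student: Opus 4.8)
The plan is to deduce the statement from the complex classification in Theorem~\ref{thm_class_rts} by transporting everything through the explicit similarity between the real and complex Jordan canonical forms that was built in Lemmas~\ref{rjcf_lem}--\ref{rjcf_lem_2}. Write $J_\R$ for the real Jordan form of $A$ as in Theorem~\ref{rjcf_thm}, with its first $r$ (real) Jordan blocks $J_{n_k}(\lambda_k)$ followed by the blocks $C_{n_k}(\lambda_k)$, $k=r+1,\dots,r+c$, and set $Q := I \oplus \bigoplus_{k=r+1}^{r+c} S_{n_k} P_{n_k}$, where $S_{n_k}$ and $P_{n_k}$ are as in Lemmas~\ref{rjcf_lem} and~\ref{rjcf_lem_2} and the leading identity block has the appropriate size. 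By the computation~\eqref{Cor_Eg}, $\inv{Q} J_\R Q = J$ is a genuine complex Jordan canonical form of $A$, in which the block $C_{n_k}(\lambda_k)$ has been replaced by $J_{n_k}(\lambda_k)\oplus J_{n_k}(\bar\lambda_k)$. Hence $Z:=RQ$ satisfies $\inv{Z} A Z = J$, all branches $f_{j_i}$ are defined on the spectrum of $A$ (as $A$ is nonsingular), and Theorem~\ref{thm_class_rts} applies verbatim: every $p$th-root of $A$, regarded in $\mc{n}$, is one of the matrices~\eqref{prim_roots} or~\eqref{nnprim_roots} built from $Z$ and a choice of indices $j_i\in\{0,\dots,p-1\}$ with $j_i=j_k$ whenever $\lambda_i=\lambda_k$.

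Next I would substitute $Z=RQ$ into~\eqref{prim_roots} and push the conjugation by $Q$ into the block-diagonal middle factor. On the blocks coming from real eigenvalues $Q$ acts as the identity, so those summands are untouched; on the $k$th complex-pair block the middle factor $f_{j_{k_1}}(J_{n_k}(\lambda_k))\oplus f_{j_{k_2}}(J_{n_k}(\bar\lambda_k))$ gets conjugated by $S_{n_k}P_{n_k}$, which is exactly the defining expression~\eqref{bigF} for $F_{j_k}(C_{n_k}(\lambda_k))$ with $j_k=(j_{k_1},j_{k_2})$. This rewrites~\eqref{prim_roots} in the form~\eqref{prim_roots2}. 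For the parametrized families, I would use that $U\mapsto Q U \inv{Q}$ is a bijection from the set of nonsingular matrices commuting with $J$ onto the set of nonsingular matrices commuting with $J_\R$; putting $\tilde U = Q U \inv{Q}$ and repeating the same manipulation turns~\eqref{nnprim_roots} into~\eqref{nnprim_roots2}, and the side condition (``$s<t$, and there exist $i,k$ with $\lambda_i=\lambda_k$ while $j_i\neq j_k$'') carries over unchanged since it refers only to the Jordan structure. The count $p^s$ is inherited directly from Theorem~\ref{thm_class_rts}: $s$, the number of distinct eigenvalues of $A$, is a similarity invariant, and a branch index is chosen once per distinct eigenvalue, each conjugate pair $\{\lambda_k,\bar\lambda_k\}$ contributing the two independent indices $j_{k_1},j_{k_2}$ precisely because $\lambda_k\neq\bar\lambda_k$. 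Exhaustiveness is immediate: every real $p$th-root of $A$ is in particular a complex $p$th-root, hence already on the list, while each displayed matrix is a $p$th-root by construction.

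I expect the only actual work to be the bookkeeping in the middle step: choosing the block ordering of $J$ so that it matches the block structure of $Q$, verifying that conjugating $f_{j_{k_1}}(J_{n_k}(\lambda_k))\oplus f_{j_{k_2}}(J_{n_k}(\bar\lambda_k))$ by $S_{n_k}P_{n_k}$ reproduces the operator $F_{j_k}$ of~\eqref{bigF} (this is essentially Corollary~\ref{rjcf_cor} read in reverse, so conceptually routine but notationally delicate), and confirming that the commutant of $J$ is carried onto the commutant of $J_\R$ under conjugation by $Q$. No genuinely new obstacle arises beyond Theorem~\ref{thm_class_rts} and the Jordan-form lemmas already established; in particular, whether a given member of the list is actually a \emph{real} matrix is a separate question, answered block by block by Corollaries~\ref{rjcf_cor_2} and~\ref{rjcf_cor_3} and not needed for the classification itself.
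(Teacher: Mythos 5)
Your proposal is correct and follows essentially the same route as the paper's proof: both define the block transition matrix $T = I \oplus \bigoplus_{k=r+1}^{r+c} S_{n_k} P_{n_k}$ (your $Q$), use Lemmas~\ref{rjcf_lem}--\ref{rjcf_lem_2} to pass from $J_\R$ to the complex Jordan form $J$, invoke Theorem~\ref{thm_class_rts}, and then push the conjugation by $T$ into the block-diagonal middle factor to recover $F_{j_k}$, with the commutant of $J$ carried onto the commutant of $J_\R$ by $W \mapsto TWT^{-1}$. The only cosmetic difference is that you set $Z = RQ$ and apply Theorem~\ref{thm_class_rts} to $A$ directly, whereas the paper applies it to $J_\R$ and then conjugates by $R$; these are the same computation.
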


\begin{proof}[\sc Proof]
Following \hyperref[rjcf_thm]{Theorem \ref*{rjcf_thm}}, there exists a real, invertible matrix $R$ such that 
\begin{align*}
\inv{R} A  R = J_\R =
\begin{bmatrix} 
\bigoplus_{k=1}^{r} J_{n_k} (\lambda_k) & \\ & \bigoplus_{k = r + 1}^{r + c} C_{n_k} (\lambda_k)  
\end{bmatrix};
\end{align*}
if $T = \begin{bmatrix} I & \\ & \bigoplus_{k = r + 1}^{r + c} S_{n_k} P_{n_k} \end{bmatrix}$, then, following Lemmas \hyperref[rjcf_lem]{\ref*{rjcf_lem}} and \hyperref[rjcf_lem_2]{\ref*{rjcf_lem_2}}, it follows that 
\begin{align*}
\inv{T} J_\R T = J =
\begin{bmatrix}
\bigoplus_{k=1}^r \jordan{n_k}{\lambda_k} 	& 										\\
& \bigoplus_{k = r + 1}^{r + c} \left[ \jordan{n_k}{\lambda_k} \oplus \jordan{n_k}{\bar{\lambda}_k} \right] 
\end{bmatrix}.
\end{align*}
Following \hyperref[thm_class_rts]{Theorem \ref*{thm_class_rts}}, $J_\R$ has $p^s$ primary roots given by
\begin{align*}
&T \begin{bmatrix}
\bigoplus_{k=1}^r f_{j_k} (\jordan{n_k}{\lambda_k}) 	& 									\\
& \bigoplus_{k = r + 1}^{r + c} \left[ f_{j_{k_1}} (\jordan{n_k}{\mu_k}) \oplus f_{j_{k_2}} ( \jordan{n_k}{\bar{\mu}_k} \right]	
\end{bmatrix} \inv{T}															\\
&= 
\begin{bmatrix} 
\bigoplus_{k=1}^r f_{j_k} \left( J_{n_k} ( \lambda_k ) \right)  & 0 \\ 
0 & \bigoplus_{k = r + 1}^{r + c} F_{j_k} \left( C_{n_k} (\mu_k) \right) 
\end{bmatrix},
\end{align*}
where $j_k = \left( j_{k_1}, j_{k_2} \right)$ for $k = r + 1, \dots, r + c$, which establishes \eqref{prim_roots2}.

If $A$ is derogatory, then $J_\R$ has additional roots of the form 
\begin{align*}
&T W \begin{bmatrix}
\bigoplus_{k=1}^r f_{j_k} (\jordan{n_k}{\lambda_k}) 	& 									\\
& \bigoplus_{k = r + 1}^{r + c} \left[ f_{j_{k_1}} (\jordan{n_k}{\mu_k}) \oplus f_{j_{k_2}} ( \jordan{n_k}{\bar{\mu}_k} \right]	
\end{bmatrix} \inv{W} \inv{T},
\end{align*}
where $W$ is any matrix that commutes with $J$. Note that 
\begin{align*}
&T W \begin{bmatrix}
\bigoplus_{k=1}^r f_{j_k} (\jordan{n_k}{\lambda_k}) 	& 									\\
& \bigoplus_{k = r + 1}^{r + c} \left[ f_{j_{k_1}} (\jordan{n_k}{\mu_k}) \oplus f_{j_{k_2}} ( \jordan{n_k}{\bar{\mu}_k} \right]	
\end{bmatrix} \inv{W} \inv{T}	\\
&=  U \begin{bmatrix}
\bigoplus_{k=1}^r f_{j_k} (\jordan{n_k}{\lambda_k}) 	& 									\\
& \bigoplus_{k = r + 1}^{r + c} F_{j_k} (C_{n_k} (\lambda_k))	
\end{bmatrix} \inv{U},
\end{align*}
where $U = T W \inv{T}$. Following \cite[Theorem 1, \S 12.4]{lt1985}, $U$ is an arbitary, nonsingular matrix that commutes with $J_\R$, which establishes \eqref{nnprim_roots2}.
\end{proof}

The next theorem identifies the number of real primary $p$th-roots of a real matrix (c.f. \cite[Theorem 2.4]{hl2011}) and our proof utilizes the real Jordan canonical form. 

\begin{cor} \label{real_rts_thm} 
Let the nonsingular real matrix $A$ have $r_1$ distinct positive real eigenvalues, $r_2$ distinct negative real eigenvalues, and $c$ distinct complex-conjugate pairs of eigenvalues. If $p$ is even, there are (a) $2^{r_1} p^c$ real primary $p$th-roots when $r_2 = 0$; and (b) no real primary $p$th-roots when $r_2 >0$. If $p$ is odd, there are $p^c$ real primary $p$th-roots.
\end{cor}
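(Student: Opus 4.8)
The plan is to read off the primary $p$th-roots of $A$ from Theorem~\ref{thm_class_rts2} and then count those that are real. Every primary $p$th-root is of the form $X_j = R\,M_j\,\inv{R}$, where $R$ is real and $M_j$ is the block-diagonal matrix appearing in \eqref{prim_roots2}; since $R$ is real, $X_j$ is real if and only if every diagonal block of $M_j$ is real. Moreover, Theorem~\ref{thm_class_rts2} asserts that there are \emph{precisely} $p^s$ primary roots, so the assignment $j \mapsto X_j$ is injective on admissible tuples. Hence the number of real primary $p$th-roots equals the number of admissible $j$ for which each block of $M_j$ is real; and because the index attached to each distinct eigenvalue may be chosen independently, it suffices to count admissible choices block by block and take the product.

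For a block $f_{j_k}(\jordan{n_k}{\lambda_k})$ with $\lambda_k$ real, I would use \eqref{pth_rt_der_fx}. Writing $\lambda_k = r\exp(\ii\theta)$ with $\theta = 0$ if $\lambda_k > 0$ and $\theta = \pi$ if $\lambda_k < 0$, each entry $f_{j_k}^{(i)}(\lambda_k)/i!$ equals a nonzero real scalar times $\exp(\ii 2\pi j_k/p)$ in the case $\theta = 0$, and a real scalar times $\exp(\ii\pi(2j_k+1)/p)$ in the case $\theta = \pi$ (the sign $(-1)^i$ being absorbed into the scalar). Thus the block is real precisely when this common phase equals $\pm 1$. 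For $\lambda_k > 0$ this means $2 j_k \equiv 0 \imod{p}$: the single choice $j_k = 0$ when $p$ is odd, and the two choices $j_k \in \{0, p/2\}$ when $p$ is even. For $\lambda_k < 0$ this means $2j_k + 1 \equiv 0 \imod{p}$: the single choice $j_k = (p-1)/2$ when $p$ is odd, and \emph{no} choice when $p$ is even.

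For a block $F_{j_k}(C_{n_k}(\lambda_k))$ with $\Im(\lambda_k) \neq 0$, Corollary~\ref{rjcf_cor_2} gives that it is real if and only if $j_{k_1} + j_{k_2} \equiv 0 \imod{p}$. (Equivalently, $F_{j_k}(C_{n_k}(\lambda_k))$ is a polynomial $q(C_{n_k}(\lambda_k))$ in the real matrix $C_{n_k}(\lambda_k)$, so it is real iff $q - \bar q$ vanishes on the spectrum of $C_{n_k}(\lambda_k)$, which by Lemma~\ref{lem_conj} is exactly this congruence.) With $j_{k_1}, j_{k_2} \in \{0, \dots, p-1\}$ the admissible pairs are $(0,0), (1, p-1), \dots, (p-1, 1)$, exactly $p$ of them, regardless of the parity of $p$.

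Finally I would assemble the count. If $p$ is even, the $r_1$ positive real eigenvalues each contribute a factor $2$, the $c$ conjugate pairs each a factor $p$, and every negative real eigenvalue a factor $0$; the product is therefore $2^{r_1} p^c$ when $r_2 = 0$ and $0$ when $r_2 > 0$, establishing (a) and (b). If $p$ is odd, each positive and each negative real eigenvalue contributes a factor $1$ and each conjugate pair a factor $p$, for a total of $p^c$. I expect the only delicate point to be the complex-conjugate block: one must ensure that when $j_{k_1} + j_{k_2} \not\equiv 0 \imod{p}$ the matrix $F_{j_k}(C_{n_k}(\lambda_k))$ fails to be real \emph{as a matrix}, not merely that it fails to have the Toeplitz shape displayed in Corollary~\ref{rjcf_cor_2}; routing that step through the ``$q - \bar q$ vanishes on the spectrum'' criterion together with Lemma~\ref{lem_conj} handles this cleanly.
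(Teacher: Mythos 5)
Your proposal is correct and follows essentially the same route as the paper: pass to the real Jordan form via Theorem~\ref{thm_class_rts2}, observe that reality of $X_j = R M_j \inv{R}$ reduces to reality of each diagonal block of $M_j$, and count admissible branch indices block by block using \eqref{pth_rt_der_fx} for real eigenvalues and Corollary~\ref{rjcf_cor_2} (via Lemma~\ref{lem_conj}) for complex-conjugate blocks. Two places where your write-up is actually more careful than the paper's: for a negative real eigenvalue with $p$ odd, you correctly identify the unique admissible branch as $j_k = (p-1)/2$, whereas the paper loosely asserts the \emph{principal} branch works for every real eigenvalue (false for negative ones, since $f_0(-r) = r^{1/p}\exp(\ii\pi/p)$ is non-real for $p>1$); and your closing paragraph properly addresses that inadmissible $j$ must yield a genuinely non-real matrix, not merely fail to have the displayed Toeplitz block form, by routing the claim through the ``$q - \bar q$ vanishes on the spectrum'' criterion and Lemma~\ref{lem_conj} --- a converse the paper leaves implicit in Corollary~\ref{rjcf_cor}.
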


\begin{proof}[\sc Proof]
Following \hyperref[rjcf_thm]{Theorem \ref*{rjcf_thm}}, there exists a real, invertible matrix $R$ such that 
\begin{align*}
\inv{R} A  R = J_\R =
\begin{bmatrix} 
\bigoplus_{k=1}^{r} J_{n_k} (\lambda_k) & \\ & \bigoplus_{k = r + 1}^{r + c} C_{n_k} (\lambda_k)  
\end{bmatrix}.
\end{align*}
Case 1: $p$ is even. If $r_2 > 0$, then $A$ does not possess a real primary root, since $A$ must have an even number of Jordan blocks of each size for every negative eigenvalue, and the same branch of the $p\tth$-root function must be selected for every Jordan block containing the same negative eigenvalue. If $r_2 = 0$, then, following \hyperref[rjcf_cor_2]{Corollary \ref*{rjcf_cor_2}}, for every complex-conjugate pair of eigenvalues, there are $p$ choices such that $F_{j_k} \left( C_{n_k} (\lambda_k) \right)$ is real. For every real eigenvalue, there are two choices such that $f_{j_k} \left( J_{n_k} ( \lambda_k ) \right)$ is real, yielding $2^{r_1} p^c$ real primary roots. 

Case 2: $p$ is odd. The matrix $X_j$ is real provided that the principal-branch of the $p\tth$-root function is chosen for every real eigenvalue. Similar to the first case, there are $p$ choices such that $F_{j_k} \left( C_{n_k} (\lambda_k) \right)$ is real, yielding $p^c$ real primary roots.
\end{proof}

The following theorem extends \hyperref[thm_class_rts]{Theorem \ref*{thm_class_rts}} to include singular matrices (see \cite[Theorem 2.6]{hl2011}).

\begin{thm}[Classification of $p$th-roots] \label{nnsing_thm}
Let $A \in \mc{n}$ have the Jordan canonical form $\inv{Z} A Z = J = J_0 \oplus J_1$, where $J_0$ collects together all the Jordan blocks corresponding to the eigenvalue zero and $J_1$ contains the remaining Jordan blocks. If $A$ possesses a $p$th-root, then all $p$th-roots of $A$ are given by $A = Z \left( X_0 \oplus X_1 \right) \inv{Z}$, where $X_1$ is any $p$th-root of $J_1$, characterized by \hyperref[thm_class_rts]{\rm Theorem \ref*{thm_class_rts}}, and $X_0$ is any $p$th-root of $J_0$.
\end{thm}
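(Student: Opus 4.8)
The plan is to decouple the singular part of the Jordan structure from the nonsingular part and to argue that every $p$th-root must respect this decoupling. First I would observe that $J_0$ is nilpotent while $J_1$ is nonsingular, so their spectra are disjoint: $\sigma(J_0) = \{0\}$ and $0 \notin \sigma(J_1)$. If $X$ is any $p$th-root of $J$, then $X^p = J$, hence $X$ commutes with $J$ (since $X$ commutes with $X^p$). I would then invoke the standard structure theorem for matrices commuting with a matrix in Jordan form (e.g., \cite[Theorem 1, \S 12.4]{lt1985}, already cited in the proof of \hyperref[thm_class_rts2]{Theorem \ref*{thm_class_rts2}}): because $J_0$ and $J_1$ share no eigenvalue, any matrix commuting with $J = J_0 \oplus J_1$ is block-diagonal conformally with this splitting, i.e., $X = X_0 \oplus X_1$ in the same basis. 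Applying the conjugation by $Z$ gives $A = Z(X_0 \oplus X_1)\inv{Z}$.

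Next I would extract the equations $X_0^p = J_0$ and $X_1^p = J_1$ from $X^p = J = J_0 \oplus J_1$, using that powers of a block-diagonal matrix are block-diagonal. This shows $X_0$ is a $p$th-root of $J_0$ and $X_1$ is a $p$th-root of $J_1$. Conversely, given any $p$th-root $X_0$ of $J_0$ and any $p$th-root $X_1$ of $J_1$ (the latter characterized by \hyperref[thm_class_rts]{Theorem \ref*{thm_class_rts}}, since $J_1$ is nonsingular), the matrix $Z(X_0 \oplus X_1)\inv{Z}$ is clearly a $p$th-root of $A$. This establishes the claimed bijection between $p$th-roots of $A$ and pairs of $p$th-roots of the two summands. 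The hypothesis that $A$ possesses a $p$th-root is needed precisely to guarantee that $J_0$ has one — equivalently, that the ascent-sequence condition of \hyperref[asc_seq_thm]{Theorem \ref*{asc_seq_thm}} restricted to the zero eigenvalue is met — since $J_1$, being nonsingular, always has $p$th-roots.

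The main obstacle is the commutation argument: one must be careful that a $p$th-root $X$ of $J$ need not a priori be a polynomial in $J$ (nonprimary roots exist when $A$ is derogatory), so I cannot simply cite that polynomials in $J$ are block-diagonal. The correct route is that $X$ commutes with $X^p = J$, and then to apply the commutant structure theorem, which holds for \emph{every} matrix commuting with $J$, not just polynomials in $J$. The only subtlety there is that the theorem gives block-diagonality with respect to the coarsest partition of Jordan blocks into groups with equal eigenvalues; since $0$ is one such eigenvalue and is collected entirely into $J_0$, the $J_0$-versus-$J_1$ split is a coarsening of that partition, so block-diagonality with respect to it follows. Everything else is routine bookkeeping with direct sums.
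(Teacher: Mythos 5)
Your proof is correct. Note that the paper does not present its own proof of this result --- the theorem is stated with a citation to \cite[Theorem 2.6]{hl2011} --- but your argument is the standard one: any $p$th-root $X$ of $J$ commutes with $J$, and since $\sigma(J_0)=\{0\}$ is disjoint from $\sigma(J_1)$, the Sylvester equations governing the off-diagonal blocks of $X$ in the conformal partition have only the trivial solution, forcing $X = X_0 \oplus X_1$. You are right to flag the subtlety that the argument must go through commutation with $J$ rather than polynomial functional calculus, precisely because nonprimary roots of a derogatory matrix need not be polynomials in $J$; resolving it via the commutant structure theorem (or, equivalently, directly via Sylvester's theorem) is exactly the correct route, and your remark that the hypothesis is only needed to guarantee $J_0$ has a root, $J_1$ being nonsingular, is also accurate.
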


\section{Main Results} \label{main_results}

We now focus on the $p$th-roots of primitive matrices and matrices possessing the strong Perron-Frobenius property.

Recall that a matrix $A \in \mr{n}$ is {\it reducible} if $n \geq 2$ and there is a permutation matrix $P$ such that 
\[ A = P^T \begin{bmatrix} A_{11} & A_{12} \\ 0 & A_{22} \end{bmatrix} P \]
where $A_{11}$ and $A_{22}$ are square, nonempty submatrices. A matrix $A$ is {\it irreducible} if it is not reducible. A matrix $A = [a_{ij}] \in \mr{n}$ is said to be (entrywise) {\it nonnegative} (respectively, {\it positive}), denoted $A \geq 0$ (respectively, $A >0$), if $a_{ij} \geq 0$ (respectively, $a_{ij} > 0$) for all $1 \leq i,j \leq n$. Recall from the \hyperref[intro]{Introduction} that a matrix $A \in \mr{n}$ is {\it eventually positive} ({\it nonnegative}) if there exists a nonnegative integer $p$ such that $A^k$ is entrywise positive (nonnegative) for all $k \geq p$. If $p$ is the smallest such integer, then $p$ is called the {\it power index of $A$} and is denoted by $p(A)$.

We recall the Perron-Frobenius theorem for positive matrices (see \cite[Theorem 8.2.11]{hj1990}).

\begin{thm} \label{pf_thm} 
If $A \in \mr{n}$ is positive, then 
\begin{enumerate}
\item[(a)] $\rho := \sr{A} > 0$;
\item[(b)] $\rho \in \sig{A}$;
\item[(c)] there exists a positive vector $x$ such that $Ax = \rho x$;
\item[(d)] $\rho$ is a simple eigenvalue of $A$.
\item[(e)] $|\lambda| < \rho$ for every $\lambda \in \sig{A}$ such that $\lambda \neq \rho$. 
\end{enumerate}
\end{thm}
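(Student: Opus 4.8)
The plan is to recover this classical result by first producing the Perron root and a positive eigenvector through a fixed-point argument, then identifying that root with $\sr{A}$, and finally extracting simplicity and the strict modulus domination from positivity together with the left Perron vector.

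First I would let $S = \{ x \in \R^n : x \geq 0,\ \mathbf{1}^T x = 1 \}$, where $\mathbf{1}$ is the all-ones vector, and note that $S$ is compact and convex. Since $A > 0$, we have $Ax > 0$ for every $x \in S$, so $\mathbf{1}^T A x > 0$ and the map $T(x) := (\mathbf{1}^T A x)^{-1} A x$ is a continuous self-map of $S$; Brouwer's fixed-point theorem then supplies $x^* \in S$ with $T(x^*) = x^*$, that is, $A x^* = \rho x^*$ with $\rho := \mathbf{1}^T A x^* > 0$. Because $x^* = \rho^{-1} A x^*$ and $A x^* > 0$, in fact $x^* > 0$, which gives (c) once $\rho$ is shown to equal $\sr{A}$. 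To see that, apply the same construction to $A^T > 0$ to obtain $y > 0$ and $\mu > 0$ with $A^T y = \mu y$; comparing $y^T A x^* = \rho\, y^T x^*$ with $y^T A x^* = \mu\, y^T x^*$ and using $y^T x^* > 0$ forces $\mu = \rho$. Then, for any $\lambda \in \sig{A}$ with eigenvector $v \neq 0$, taking moduli entrywise in $A v = \lambda v$ and using $A > 0$ gives $A|v| \geq |\lambda|\,|v|$; left-multiplying by $y^T > 0$ yields $\rho\, y^T |v| \geq |\lambda|\, y^T |v|$ with $y^T|v| > 0$, so $|\lambda| \leq \rho$. Hence $\rho = \sr{A}$, establishing (a), (b), and (c).

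For (d) I would argue geometric simplicity first: if $v$ is an eigenvector for $\rho$, replacing it by its real or imaginary part reduces to $v$ real, and if $v$ were not a multiple of $x^*$ then, since $x^* > 0$, the set $\{ t \in \R : x^* - tv \geq 0 \}$ is a closed interval with $0$ in its interior, bounded on at least one side because $v \neq 0$; at a finite endpoint $t_0$ the vector $z := x^* - t_0 v$ is nonnegative, nonzero (else $v$ is a multiple of $x^*$), and has a zero entry, yet $Az = \rho z$ forces $z = \rho^{-1} A z > 0$ — a contradiction. So the $\rho$-eigenspace is spanned by $x^*$. For algebraic simplicity, since the geometric multiplicity is $1$, a Jordan block of size $\geq 2$ would give a generalized eigenvector $u$ with $(A - \rho I) u = x^*$ after rescaling; left-multiplying by $y^T$ and using $A^T y = \rho y$ makes the left side $0$ while the right side is $y^T x^* > 0$ — again a contradiction. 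Thus $\rho$ is a simple eigenvalue.

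Finally, for (e), suppose $\lambda \in \sig{A}$ satisfies $\lambda \neq \rho$ but $|\lambda| = \rho$, with $A v = \lambda v$, $v \neq 0$. From $A|v| \geq \rho|v|$ together with $y^T(A|v| - \rho|v|) = (\mu - \rho)\, y^T|v| = 0$, $y > 0$, and $A|v| - \rho|v| \geq 0$, I get $A|v| = \rho|v|$, so by (c)--(d) we have $|v| = c\, x^* > 0$ for some $c > 0$. Then equality holds in every entrywise triangle inequality $\bigl| \sum_j a_{ij} v_j \bigr| = \sum_j a_{ij}|v_j|$; since the $a_{ij}$ are positive and the $v_j$ nonzero, all the $v_j$ must share a common argument $\theta$, i.e.\ $v = \exp(\ii \theta)|v|$, whence $A v = \exp(\ii\theta) A|v| = \rho\, \exp(\ii\theta)|v| = \rho v$ and $\lambda = \rho$, a contradiction. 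Therefore $|\lambda| < \rho$ for all $\lambda \in \sig{A}$ with $\lambda \neq \rho$. The only subtle points are the identification of the fixed-point eigenvalue with $\sr{A}$ and the passage from geometric to algebraic simplicity, both handled by the left Perron vector $y$; a Collatz--Wielandt variational argument, taking $\rho = \sup_{0 \neq x \geq 0} \min_{x_i > 0} (Ax)_i / x_i$ and showing the supremum is attained at a positive vector, could replace the Brouwer step, but the same positivity upgrades would still be needed afterward.
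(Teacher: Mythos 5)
The paper does not prove this theorem; it simply recalls it as the classical Perron--Frobenius theorem for positive matrices and cites Horn and Johnson (Theorem 8.2.11). Your proposal is therefore doing more work than the paper asks of itself, and it is correct. You follow one of the standard self-contained routes: Brouwer's fixed-point theorem on the simplex $S$ produces an eigenpair $(\rho, x^*)$ with $\rho>0$, and positivity of $A$ immediately upgrades $x^*\ge 0$ to $x^*>0$; the left Perron vector $y>0$ obtained from $A^T$ then pins down $\rho$ as the spectral radius via the inequality $A|v|\ge |\lambda||v|$; geometric simplicity comes from the sliding argument $x^*-tv$ together with the positivity upgrade, algebraic simplicity from pairing a would-be Jordan chain against $y$, and the strict inequality (e) from the equality case of the triangle inequality with positive weights. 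Each of these steps is sound. Two small points worth tightening in a final write-up: in the geometric-simplicity step, after passing to a real eigenvector $v$ you should note that if a complex eigenvector is not a $\C$-multiple of $x^*$ then at least one of $\Re v,\Im v$ is a nonzero real eigenvector not proportional to $x^*$ (your "reduce to $v$ real" silently uses this); and in the same step you should state explicitly that the finite endpoint $t_0\neq 0$ (which follows from $0$ being interior), since that is what makes $z=0$ imply $v\in\spn\{x^*\}$. Compared with, say, a Collatz--Wielandt variational proof or a Hilbert-metric contraction proof, the Brouwer route is arguably the fastest way to existence of the positive eigenpair, at the cost of invoking a nontrivial topological theorem; the subsequent "positivity upgrade" steps are common to all approaches, as you correctly observe.
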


There are nonnegative matrices containing entries that are zero that satisfy \hyperref[pf_thm]{Theorem \ref*{pf_thm}}. Recall that a nonnegative matrix $A \in \mr{n}$ is said to be {\it primitive} if it is irreducible and has only one eigenvalue of maximum modulus. The conclusions to \hyperref[pf_thm]{Theorem \ref*{pf_thm}} apply to primitive matrices (see \cite[ Theorem 8.5.1]{hj1990}), and the following theorem is a useful characterization of primitivity (see \cite[Theorem 8.5.2]{hj1990}).

\begin{thm} 
If $A \in \mr{n}$ is nonnegative, then $A$ is primitive if and only if $A^k > 0$ for some $k \geq 1$. 
\end{thm}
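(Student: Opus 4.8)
This is a classical characterization of primitivity; beyond the cited reference, here is the approach I would take, using \hyperref[pf_thm]{Theorem \ref*{pf_thm}} as the main tool and handling the two implications separately.

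For the ``if'' direction, assume $A^k > 0$ for some $k \geq 1$. First, $A$ cannot be reducible, since a permutation similarity putting $A$ into block upper-triangular form with a zero off-diagonal block is inherited by every power of $A$, so no power of a reducible matrix is positive. Second, a zero row or column of $A$ would force a zero row or column in $A^k$; hence $A$ has none, and writing $A^{k+1} = A A^k$ entrywise and using $A \geq 0$ together with $A^k > 0$ gives $A^{k+1} > 0$, so by induction $A^j > 0$ for all $j \geq k$. Now let $\lambda \in \sigma(A)$ with $|\lambda| = \rho(A) =: \rho$. For each $j \geq k$, $\lambda^j$ is an eigenvalue of the positive matrix $A^j$ with $|\lambda^j| = \rho^j = \rho(A^j)$, so parts (a) and (e) of \hyperref[pf_thm]{Theorem \ref*{pf_thm}} applied to $A^j$ give $\rho > 0$ and $\lambda^j = \rho^j$. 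Taking $j = k$ and $j = k+1$ and dividing (valid since $|\lambda| = \rho > 0$) yields $\lambda = \rho$. Thus $\rho$ is the only eigenvalue of maximum modulus; being also nonnegative and irreducible, $A$ is primitive.

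For the ``only if'' direction, assume $A$ is primitive. Since the conclusions of \hyperref[pf_thm]{Theorem \ref*{pf_thm}} hold for primitive matrices, $\rho := \rho(A) > 0$ is a simple eigenvalue that strictly dominates every other eigenvalue of $A$ in modulus, and there is a positive vector $x$ with $Ax = \rho x$. The matrix $A^T$ is nonnegative, has the same spectrum as $A$, and is irreducible (reducibility is preserved under transposition), so $A^T$ is also primitive; hence \hyperref[pf_thm]{Theorem \ref*{pf_thm}} provides a positive vector $y$ with $A^T y = \rho y$, and we normalize so that $y^T x = 1$. Passing to the Jordan form $Z^{-1} A Z = [\rho] \oplus J_1$, where the summand $[\rho]$ is the single $1 \times 1$ Jordan block of $\rho$ (legitimate since $\rho$ is simple) and $J_1$ collects the remaining blocks, every eigenvalue appearing in $J_1$ has modulus $< \rho$, so $\rho^{-m} J_1^m \to 0$ and therefore $\rho^{-m} A^m$ converges to the spectral projection onto the $\rho$-eigenspace, which equals $x y^T$. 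Since $x > 0$ and $y > 0$, this limit is entrywise positive, so $\rho^{-m} A^m$, and hence $A^m$, is positive for all sufficiently large $m$.

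The substantive step is the ``only if'' direction: one must turn the purely spectral hypothesis ``$A$ has a single eigenvalue of maximum modulus'' into an entrywise positivity statement. The argument above does this analytically through the Perron projection, and its two genuine inputs --- that both the left and right Perron eigenvectors may be taken strictly positive, and that $\rho$ is \emph{strictly} dominant in modulus --- are exactly what primitivity buys us via \hyperref[pf_thm]{Theorem \ref*{pf_thm}}. A combinatorial alternative would instead identify the number of maximum-modulus eigenvalues with the gcd of the cycle lengths in the digraph of $A$ and then appeal to the fact that a set of integers with gcd $1$ generates every sufficiently large integer, but establishing that identification is itself a nontrivial piece of Perron--Frobenius theory.
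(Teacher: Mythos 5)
The paper does not prove this theorem; it is recalled as a known fact with a citation to Horn and Johnson, so there is no in-paper argument to compare against. Your proof is correct and is essentially the standard one. In the ``if'' direction, the reduction to irreducibility via preservation of the block upper-triangular zero pattern under powers is right; noting that $A^k>0$ rules out zero rows of $A$, whence $A^{k+1}=AA^k>0$ and inductively $A^j>0$ for all $j\geq k$, is the small lemma needed; and applying part (e) of the Perron theorem to the two consecutive positive powers $A^k$ and $A^{k+1}$ to force $\lambda^k=\rho^k$ and $\lambda^{k+1}=\rho^{k+1}$, then dividing (legitimate since part (a) gives $\rho>0$), is a clean way to conclude that $\rho$ is the unique eigenvalue of maximum modulus. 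In the ``only if'' direction, you use the analytic Perron-projection argument: simplicity and strict dominance of $\rho$ give $\rho^{-m}J_1^m\to 0$, so $\rho^{-m}A^m\to xy^{T}$, and positivity of this rank-one limit (both Perron vectors being positive because $A$ and $A^T$ are primitive) forces $A^m>0$ for all large $m$. This is the natural route given that the Perron--Frobenius theorem for primitive matrices is already in hand, and it is the substantive half, since it converts a purely spectral hypothesis into an entrywise one. One could instead argue combinatorially through the index of imprimitivity and cycle lengths in the digraph of $A$, as you note, but the analytic argument is shorter here and uses exactly the tools the paper has already set up.
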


One can verify that the matrix $\begin{bmatrix} 2 & 1 \\ 2 & -1 \end{bmatrix}$ possesses properties (a) through (e) of \hyperref[pf_thm]{Theorem \ref*{pf_thm}}, is irreducible, but obviously contains a negative entry. This motivates the following concept.

\begin{mydef} 
A matrix $A \in \mr{n}$ is said to possess the {\it strong Perron-Frobenius property} if $A$ possesses properties (a) through (e) of \hyperref[pf_thm]{Theorem \ref*{pf_thm}}.
\end{mydef}

The following theorem characterizes the strong Perron-Frobenius property (see \cite[Lemma 2.1]{h1981}, \cite[Theorem 1]{jt2004}, or \cite[Theorem 2.2]{n2006}).

\begin{thm} \label{evpos_thm}
A real matrix $A$ is eventually positive if and only if $A$ and $A^T$ possess the strong Perron-Frobenius property. 
\end{thm}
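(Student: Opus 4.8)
The statement is an equivalence, and the plan is to prove the two implications separately. Since $(A^T)^k = (A^k)^T$, eventual positivity of $A$ is equivalent to that of $A^T$, so for the forward implication it suffices to show that an eventually positive matrix $A$ itself satisfies properties (a)--(e) of \hyperref[pf_thm]{Theorem \ref*{pf_thm}} and then invoke the same conclusion for $A^T$. The engine for this direction is the spectral mapping $\sig{A^m} = \{ \lambda^m : \lambda \in \sig{A} \}$, read off the Jordan form with multiplicities: it transports the Perron--Frobenius conclusions, which hold for the \emph{positive} matrices $A^p$ and $A^{p+1}$ by \hyperref[pf_thm]{Theorem \ref*{pf_thm}}, back down to $A$.

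In detail, write $p = p(A)$ and apply \hyperref[pf_thm]{Theorem \ref*{pf_thm}} to $B := A^p > 0$: then $\sr{B} = \sr{A}^p > 0$ is a simple eigenvalue of $B$ that strictly dominates every other eigenvalue of $B$ in modulus, with $x > 0$ satisfying $Bx = \sr{B} x$. First I would identify the Perron eigenvalue of $A$: since $A$ commutes with $B$, the vector $Ax$ again lies in the one-dimensional $\sr{B}$-eigenspace of $B$, so $Ax = \alpha x$ for some $\alpha \in \R$; then $A^{p+1}x = \alpha^{p+1}x$ is positive, which together with $\alpha^p x = Bx = \sr{A}^p x$ forces $\alpha = \sr{A}$, settling (a), (b), (c). For (e), if $\mu \in \sig{A}$ has $|\mu| = \sr{A}$, then $\mu^p \in \sig{A^p}$ has maximal modulus, so $\mu^p = \sr{A}^p$ by part (e) of \hyperref[pf_thm]{Theorem \ref*{pf_thm}} applied to $A^p$; likewise $\mu^{p+1} = \sr{A}^{p+1}$ via $A^{p+1}$; dividing yields $\mu = \sr{A}$, so $\sr{A}$ is the \emph{unique} eigenvalue of maximal modulus. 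For (d), comparing the Jordan forms of $A$ and $A^p$ (using $\sr{A} \neq 0$) shows the algebraic multiplicity of $\sr{A}^p$ in $A^p$ equals the sum of the algebraic multiplicities in $A$ of those $\mu \in \sig{A}$ with $\mu^p = \sr{A}^p$; by the previous step the only such $\mu$ is $\sr{A}$, so this sum is exactly the multiplicity of $\sr{A}$ in $A$, which is therefore $1$ since the left-hand side is $1$. Applying all of this to $A^T$ (also eventually positive) finishes the forward direction.

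For the converse, suppose $A$ and $A^T$ both possess the strong Perron--Frobenius property and put $\rho = \sr{A} = \sr{A^T} > 0$. Choose $x > 0$ with $Ax = \rho x$ and, from the property for $A^T$, choose $y > 0$ with $A^T y = \rho y$, i.e. a positive left eigenvector of $A$. Because $\rho$ is a simple eigenvalue of $A$ strictly dominating all others in modulus, the Jordan form of $A/\rho$ is a single $1 \times 1$ block equal to $1$ together with blocks of spectral radius $< 1$, whose powers tend to $0$; hence the scaled powers of $A$ converge to the spectral projection onto $\spn\{x\}$:
\[ \left( \frac{A}{\rho} \right)^{k} \longrightarrow \frac{x\,y^T}{y^T x} \qquad (k \to \infty). \]
Since $x > 0$, $y > 0$, and $y^T x > 0$, this limit is entrywise positive, so $(A/\rho)^k > 0$ --- equivalently $A^k > 0$ --- for all sufficiently large $k$; thus $A$ is eventually positive.

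I expect the forward implication to be the main obstacle: the real work is confirming that $\sr{A}$, a nonnegative real number that a priori need not even lie in $\sig{A}$, is genuinely an eigenvalue of $A$, is the unique eigenvalue of maximal modulus, and is simple, using only that high powers of $A$ are positive. Playing $A^p$ against $A^{p+1}$ to eliminate the other modulus-$\sr{A}$ eigenvalues, and tracking algebraic multiplicities under $\lambda \mapsto \lambda^p$, are the steps that need care. The converse, by contrast, is the classical power-limit formula for a simple dominant eigenvalue and is routine once simplicity and strict dominance are available.
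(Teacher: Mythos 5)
The paper does not actually prove \hyperref[evpos_thm]{Theorem~\ref*{evpos_thm}} --- it is stated as a known characterization with citations to \cite[Lemma 2.1]{h1981}, \cite[Theorem 1]{jt2004}, and \cite[Theorem 2.2]{n2006} --- so there is no in-paper argument to compare yours against. That said, your proof is correct and self-contained, and it captures the essential mechanism of the result. In the forward direction, the device of applying \hyperref[pf_thm]{Theorem~\ref*{pf_thm}} to both $A^p$ and $A^{p+1}$ and then ``dividing'' is exactly the right way to rule out the sign ambiguity in $\alpha^p = \sr{A}^p$ and to eliminate spurious peripheral eigenvalues ($\mu^p=\sr{A}^p$ and $\mu^{p+1}=\sr{A}^{p+1}$ give $\mu=\sr{A}$); the multiplicity count via the Jordan form of $z\mapsto z^p$ at the nonzero eigenvalue $\sr{A}$ cleanly finishes (d). One small stylistic point: you implicitly use that $p\geq 1$ so that $B=A^p$ is genuinely positive; this holds for $n\geq 2$ (and the $n=1$ case is trivial), and it is worth saying explicitly. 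The converse via the rank-one power limit $(A/\rho)^k\to xy^T/(y^Tx)$ is the standard argument and uses both hypotheses --- strong Perron--Frobenius for $A$ gives simplicity and strict dominance of $\rho$, while strong Perron--Frobenius for $A^T$ is what guarantees that the left eigenvector $y$, and hence the limiting projection, is entrywise positive. This is in the spirit of the proofs in the cited references, which also hinge on the spectral projection onto $\spn\{x\}$ being positive.
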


We now present our main results.

\begin{thm} \label{main_thm} 
Let the nonsingular primitive matrix $A$ have $r_1$ distinct positive real eigenvalues, $r_2$ distinct negative real eigenvalues, and $c$ distinct complex-conjugate pairs of eigenvalues. If $p$ is even, there are (a) $2^{r_1-1} p^c$ eventually positive primary $p$th-roots when $r_2 = 0$; and (b) no eventually positive primary $p$th-roots if $r_2 >0$. If $p$ is odd, there are $p^c$ eventually positive primary $p$th-roots.
\end{thm}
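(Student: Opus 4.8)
The plan is to reduce Theorem~\ref{main_thm} to Corollary~\ref{real_rts_thm} by using Theorem~\ref{evpos_thm}: a real matrix is eventually positive precisely when it and its transpose both have the strong Perron--Frobenius property. Since the primary $p$th-roots of $A$ are polynomials in $A$, a primary root $X_j$ shares the eigenvectors of $A$, and $X_j^T$ is the corresponding polynomial in $A^T$; so checking eventual positivity of $X_j$ reduces to a spectral condition on $X_j$. Concretely, I would first invoke Corollary~\ref{real_rts_thm} to enumerate all real primary $p$th-roots: $2^{r_1}p^c$ when $p$ is even and $r_2=0$, none when $p$ is even and $r_2>0$, and $p^c$ when $p$ is odd. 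Case (b) is then immediate: if $r_2>0$ there are no real primary roots at all, hence no eventually positive ones.

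For the remaining cases the task is to determine which of these real primary roots are eventually positive. The Perron root $\rho=\rho(A)$ is a simple eigenvalue of $A$ with positive left and right eigenvectors (Theorem~\ref{pf_thm}, valid for primitive $A$). For a primary root $X_j$, the eigenvalue of $X_j$ corresponding to the Perron eigenvector of $A$ is $f_{j_\rho}(\rho)=\rho^{1/p}\exp(2\pi\ii j_\rho/p)$ for some branch index $j_\rho$; this must be real and positive for $X_j$ (and $X_j^T$) to have the strong Perron--Frobenius property, forcing $j_\rho=0$, i.e.\ the principal branch must be chosen at $\rho$. It then remains to verify that with $j_\rho=0$ the value $\rho^{1/p}$ is indeed simple, dominant in modulus, and has positive eigenvectors for $X_j$: simplicity and the eigenvectors are inherited from $A$ since $X_j=g(A)$ with $g$ the relevant branch polynomial, and strict modular dominance holds because for every other eigenvalue $\mu$ of $A$ we have $|\mu|<\rho$, hence $|f_{j_i}(\mu)|=|\mu|^{1/p}<\rho^{1/p}$ regardless of the branch chosen at $\mu$. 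Thus a real primary root is eventually positive if and only if the principal branch is selected at $\rho$, and the branch choices at the other eigenvalues are unconstrained (beyond the reality constraints already imposed in Corollary~\ref{real_rts_thm}).

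Counting then finishes the proof. When $p$ is odd, Corollary~\ref{real_rts_thm} already fixes the principal branch at every real eigenvalue (including $\rho$), so all $p^c$ real primary roots are automatically eventually positive. When $p$ is even and $r_2=0$, the $2^{r_1}p^c$ real primary roots come from two branch choices at each of the $r_1$ positive real eigenvalues and $p$ allowable choices at each complex-conjugate pair; requiring the principal branch at $\rho$ (which is one of the $r_1$ positive real eigenvalues) removes exactly one of the two choices there, leaving $2^{r_1-1}p^c$ eventually positive primary roots. This matches the claimed counts.

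The step I expect to be the main obstacle is the rigorous verification that ``principal branch at $\rho$'' is both necessary and sufficient for eventual positivity of a real primary root---in particular confirming that $X_j$ inherits the positivity of the Perron left and right eigenvectors of $A$ and that no spurious eigenvalue of $X_j$ can tie or exceed $\rho^{1/p}$ in modulus. This hinges on the fact that $X_j$ is a polynomial in $A$ (so eigenvectors are preserved and $X_j^T$ is the same polynomial in $A^T$), together with the strict inequality $|\mu|<\rho$ for all non-Perron eigenvalues $\mu$ of the primitive matrix $A$; once these are in place, Theorem~\ref{evpos_thm} delivers eventual positivity. Everything else is bookkeeping with the branch indices already organized by Corollary~\ref{rjcf_cor_2}, Corollary~\ref{rjcf_cor_3}, and Corollary~\ref{real_rts_thm}.
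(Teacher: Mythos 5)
Your proposal is correct and takes essentially the same route as the paper's proof: both reduce (b) to \hyperref[real_rts_thm]{Corollary~\ref*{real_rts_thm}}, both use \hyperref[evpos_thm]{Theorem~\ref*{evpos_thm}} together with the facts that a primary root is a polynomial in $A$ (so Perron eigenvectors are inherited) and that $|\mu|<\rho$ forces $|f_j(\mu)|<\rho^{1/p}$, and both then count branch choices after fixing the principal branch at $\rho$. If anything, you make the necessity of $j_\rho=0$ slightly more explicit than the paper's own proof, which builds that choice into the displayed form of $X_j$ and defers the necessity argument to \hyperref[main_thm_2]{Theorem~\ref*{main_thm_2}}.
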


\begin{proof}[\sc Proof] 
If $r_2 > 0$, then, following \hyperref[real_rts_thm]{Theorem \ref*{real_rts_thm}}, the matrix $A$ does not have a real primary $p$th-root, hence, {\it a fortiori}, it can not have an eventually positive primary $p$th-root. 

If $r_2 = 0$, then, following Theorems \hyperref[rjcf_thm]{\ref*{rjcf_thm}} and \hyperref[pf_thm]{\ref*{pf_thm}}, there exists a real, invertible matrix $R$ such that   
\begin{align}
A = \begin{bmatrix} x & R' \end{bmatrix} 
\begin{bmatrix} 
\rho &                                                                         &  								\\
        & \bigoplus_{k=2}^r \jordan{n_k}{\lambda_k} & 								\\
        & 						         & \bigoplus_{k=r+1}^{r+c} C_{n_k} (\lambda_k) 
\end{bmatrix}
\begin{bmatrix} y^T \\ (\inv{R})' \end{bmatrix},
\end{align} 
where $R = \begin{bmatrix} x & R' \end{bmatrix} \in \mr{n}$, $\inv{R} = \begin{bmatrix} y \\ (\inv{R})' \end{bmatrix} \in \mr{n}$, $x > 0$ is the right Perron-vector, and $y>0$ is the left Perron-vector. 

Because $| f_j (z) | < |f_{j'} (z')|$ for all $j$, $j' \in \{ 0, 1, \dots, p-1 \}$, any primary $p\tth$-root $X$ of the form
\begin{align*}
X_j = R 
\begin{bmatrix} 
\sqrt[p]{\rho} & & 						\\
 & \bigoplus_{k=2}^r f_{j_k} (\jordan{n_k}{\lambda_k}) & 	\\
 & & \bigoplus_{k=r+1}^{r+c} F_{j_k} (C_{n_k} (\lambda_k)) 
\end{bmatrix}
\inv{R}
\end{align*}
inherits the strong Perron-Frobenius property from $A$. Since $f(A)^T = f(A^T)$ (\cite[Theorem 1.13(b)]{h2008}), a similar argument demonstrates that $X^T$ inherits the strong Perron-Frobenius property from $A^T$. 
 
Case 1: $p$ is even. For all $k = 2,\dots,r$, there are two possible choices such that $f_{j_k} \left( J_{n_k} ( \lambda_k ) \right)$ is real;  following \hyperref[rjcf_cor_2]{Corollary \ref*{rjcf_cor_2}}, for all $k = r+1, \dots, r+c$, there are $p$ choices such that $F_{j_k} \left( C_{n_k} (\lambda_k) \right)$ is real. Thus, there are $2^{r_1-1}p^c$ possible ways to select $X$ and $X^T$ to be real.

Case 2: $p$ is odd. For all $k = 2,\dots,r$, the principal-branch of the $p\tth$-root function must be selected so that $f_{j_k} \left( J_{n_k} ( \lambda_k ) \right)$ is real and, similar to the previous case, there are $p$ choices such that $F_{j_k} \left( C_{n_k} (\lambda_k) \right)$ is real. Hence, there are $p^c$ ways to select $X$ and $X^T$ real.

In either case, following \hyperref[evpos_thm]{Theorem \ref*{evpos_thm}}, the matrices $X$ and $X^T$ are eventually positive.
\end{proof}

\begin{ex} \label{thm_example}
We demonstrate \hyperref[main_thm]{Theorem \ref*{main_thm}} via an example. Consider the matrix 
\begin{align*}
A = \frac{1}{5}
\begin{bmatrix}
16 & 16 & 6 & 11 & 1	\\
 7 & 12 & 12 & 12 & 7 	\\
 9 & 4 & 14 & 9 & 14 	\\
 8 & 8 & 8 & 13 & 13 	\\
10 & 10 & 10 & 5 & 15 	
\end{bmatrix}.
\end{align*}
Note that $A = R J_\R \inv{R}$, where 
\begin{align*}
J_\R = \kbordermatrix{
& & & & & 				\\
& 10 & \vrule & 0 & 0 & 0 & 0 	\\
\cline{2-7}
& 0 & \vrule & 1 & 1 & 1 & 0 	\\
& 0 & \vrule & -1 & 1 & 0 & 1 	\\
& 0 & \vrule & 0 & 0 & 1 & 1 	\\
& 0 & \vrule & 0 & 0 & -1 & 1 	
}, \hspace{3pt} R = 
\begin{bmatrix} 
1 & 1 & 1 & 1 & 1 	\\ 
1 & -1 & 0 & 0 & 0 	\\  
1 & 0 & -1 & 0 & 0 	\\ 
1 & 0 & 0 & -1 & 0  	\\ 
1 & 0 & 0 & 0 & -1  
\end{bmatrix},
\end{align*}  
and 
\begin{align*}
\inv{R} = \frac{1}{5}
\begin{bmatrix} 
1 & 1 & 1 & 1 & 1 	\\ 
1 & -4 & 1 & 1 & 1 	\\  
1 & 1 & -4 & 1 & 1 	\\ 
1 & 1 & 1 & -4 & 1  	\\ 
1 & 1 & 1 & 1 & -4  
\end{bmatrix}.
\end{align*} 
Because $\sig{A} = \{ 10, 1 + \ii, 1 + \ii, 1 - \ii, 1 - \ii  \}$, following \hyperref[main_thm]{Theorem \ref*{main_thm}}, $A$ has eight primary matrix square-roots, of which the matrices
\vspace{-12pt}
\begin{align*} 
X_j &= 
R \kbordermatrix{ 							\\
& \sqrt{10} & \vrule & 0 & 0 & 0 & 0				\\
\cline{2-7}
 & 0 & \vrule & 1.0987 & 0.4551 & 0.3884 &  -0.1609		\\
 & 0 & \vrule & -0.4551 & 1.0987  &  0.1609 &  0.3884	\\
 & 0 & \vrule & 0 & 0 & 1.0987 & 0.4551			\\
 & 0 & \vrule & 0 & 0 &  -0.4551 & 1.0987} \inv{R}		\\
&= \begin{bmatrix}
1.6668 & 1.0232  & 0.1130 & 0.4738 & -0.1145	\\
0.2762 & 1.3749  & 0.7313 & 0.6646 & 0.1153	\\
0.3939 & -0.0612 & 1.4926 & 0.5548 & 0.7823	\\
0.3217 & 0.3217  & 0.3217 & 1.4204 & 0.7768	\\
0.5037 & 0.5037 & 0.5037 & 0.0486 & 1.6024
\end{bmatrix},
\end{align*} 
where $j = (0,(0,0))$, and
\vspace{-12pt}
\begin{align*}
X_{j'} &= R
\kbordermatrix{ 							\\
& \sqrt{10} & \vrule & 0 & 0 & 0 & 0				\\
\cline{2-7}
 & 0 & \vrule & -1.0987 & -0.4551 & -0.3884 &  0.1609	\\
 & 0 & \vrule & 0.4551 & -1.0987  &  -0.1609 &  -0.3884	\\
 & 0 & \vrule & 0 & 0 & -1.0987 & -0.4551			\\
 & 0 & \vrule & 0 & 0 &  0.4551 & -1.0987} \inv{R}		\\
&= \begin{bmatrix}
-0.4019 & 0.2417 & 1.1519 & 0.7911 & 1.3794 \\
0.9887 & -0.1100 & 0.5336 & 0.6003 & 1.1496 \\
0.8710 & 1.3261 & -0.2276 & 0.7101 & 0.4826 \\
0.9432 & 0.9432 & 0.9432 & -0.1555 & 0.4881 \\
0.7612 & 0.7612 & 0.7612 & 1.2163 & -0.3375
\end{bmatrix},
\end{align*}  where $j' = (0,(1,1))$, are eventually positive square-roots of $A$.
\end{ex}

The following question arises from \hyperref[thm_example]{Example \ref*{thm_example}}: are $X_j$ and $X_{j'}$ the only eventually positive square-roots of $A$? This is answered in the following result, which yields an explicit description of the eventually positive primary roots of a nonsingular primitive matrix $A$.

\begin{thm} \label{main_thm_2}
Let $A$ be a nonsingular, primitive matrix and $X_j$ be any primary $p$th-root of $A$ of the form  
\begin{align*}
X_j = R
\begin{bmatrix} 
f_{j_1} (\rho) & & 							\\
 & \bigoplus_{k=2}^r f_{j_k} (\jordan{n_k}{\lambda_k}) & 	\\
 & & \bigoplus_{k=r+1}^{r+c} F_{j_k} (C_{n_k} (\lambda_k)) 
\end{bmatrix} \inv{R},
\end{align*}
where  $j =  \left( j_1, \dots,j_r, j_{r+1}, \dots, j_{r+c} \right)$ and $j_k = ( j_{k_1}, j_{k_2})$, for $k = r+1,\dots,r+c$. If $p$ is odd, then $X_j$ is eventually positive if and only if 
\begin{enumerate}
\item $j_1 = 0$; 
\item $j_k = 0$ for all $k = 2,\dots,r$;  and
\item $j_k = (0,0)$ or $j_k = ( j_{k_1},p -  j_{k_1} )$ for all $k = r+1, \dots, r+c$.
\end{enumerate}
If $p$ is even, then 
$X_j$ is eventually positive if and only if 
\begin{enumerate}
\item $j_1 = 0$; 
\item $j_k = 0$ or $j_k = p/2$ for all $k = 2,\dots,r$;  and
\item $j_k = (0,0)$ or $j_k = ( j_{k_1},p -  j_{k_1} )$ for all $k = r+1, \dots, r+c$.
\end{enumerate}
\end{thm}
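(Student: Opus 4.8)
The plan is to combine the enumeration of real primary roots already carried out for the general real case with the Perron–Frobenius analysis from \hyperref[main_thm]{Theorem \ref*{main_thm}}, and then sharpen both directions so as to pin down exactly which branch selections force eventual positivity. First I would reduce to the real Jordan form via \hyperref[rjcf_thm]{Theorem \ref*{rjcf_thm}}: since $A$ is nonsingular and primitive, $\rho = \sr{A}$ is a simple positive eigenvalue with positive right and left Perron vectors $x$, $y$, and a primary root $X_j$ decomposes exactly as displayed in the statement, with the Perron direction sitting in its own $1\times1$ block $f_{j_1}(\rho)$. The key observation, used already in \hyperref[main_thm]{Theorem \ref*{main_thm}}, is that $|f_j(z)| < |f_{j'}(z')|$ whenever $|z| < |z'|$; applied with $z'$ the eigenvalue $\rho$ of largest modulus and $z$ any other eigenvalue of $A$, this says that for \emph{any} admissible branch choice the number $\rho^{1/p} = f_0(\rho)$ strictly dominates in modulus every eigenvalue of $X_j$, and in fact is the only eigenvalue of $X_j$ of that modulus. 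Hence $X_j$ always has a simple, strictly dominant eigenvalue whose eigenvector is (a scalar multiple of) $x > 0$, provided only that $f_{j_1}(\rho)$ is the positive real root — i.e.\ $j_1 = 0$; any other branch would make the would-be Perron root non-positive or (if $p$ is even and $j_1 = p/2$) negative, destroying property (a)/(c) of the strong Perron–Frobenius property. The same argument applied to $X_j^T = (X_j)^T$, using $f(A)^T = f(A^T)$ and the left Perron vector $y > 0$, shows condition $j_1 = 0$ is also necessary and sufficient for $X_j^T$ to carry the strong Perron–Frobenius property of $A^T$ \emph{once we know $X_j$ is real}.

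So the whole problem collapses to: for which branch vectors $j$ is the matrix $X_j$ \emph{real}? Here I would invoke the machinery of Section~\ref{theory} block by block. The block $f_{j_1}(\rho)$ is real iff $j_1 = 0$ when $p$ is odd, and iff $j_1 \in \{0, p/2\}$ when $p$ is even — but $j_1 = p/2$ is already excluded by the Perron requirement above, so effectively $j_1 = 0$. For each real Jordan block $\jordan{n_k}{\lambda_k}$ with $\lambda_k$ positive (recall $r_2 = 0$ is forced, since by \hyperref[real_rts_thm]{Corollary \ref*{real_rts_thm}} a negative real eigenvalue obstructs a real primary root when $p$ is even, and when $p$ is odd forces the principal branch anyway), $f_{j_k}(\jordan{n_k}{\lambda_k})$ is real iff $f_{j_k}$ maps the positive axis to the reals, i.e.\ iff $j_k = 0$ ($p$ odd) or $j_k \in \{0, p/2\}$ ($p$ even). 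For each complex-conjugate block $C_{n_k}(\lambda_k)$ with $\Im(\lambda_k) \neq 0$, \hyperref[rjcf_cor_2]{Corollary \ref*{rjcf_cor_2}} says $F_{j_k}(C_{n_k}(\lambda_k))$ is real iff $j_{k_1} = j_{k_2} = 0$ or $j_{k_1} = j_{k_2} - p$; writing this in the normalized range $\{0,\dots,p-1\}$ gives precisely $j_k = (0,0)$ or $j_k = (j_{k_1}, p - j_{k_1})$, matching condition~(3) in both parities. Assembling the three block types and using $f(A \oplus B) = f(A) \oplus f(B)$ together with the reality of the real similarity $R$, one gets that $X_j$ is real iff conditions (1)–(3) hold, with (2) reading $j_k = 0$ in the odd case and $j_k \in \{0, p/2\}$ in the even case. (One should double-check the primitive/non-primitive subtlety: $\rho$ is simple and $r_2 = 0$, so the Perron block is never derogatory, and no $U$-parameter families interfere with the primary roots under consideration.)

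Finally I would close the loop: if $X_j$ is real and satisfies (1), then by the domination argument $X_j$ and $X_j^T$ both possess the strong Perron–Frobenius property, so by \hyperref[evpos_thm]{Theorem \ref*{evpos_thm}} $X_j$ is eventually positive; conversely, an eventually positive $X_j$ is a fortiori real and, again by \hyperref[evpos_thm]{Theorem \ref*{evpos_thm}}, has the strong Perron–Frobenius property, forcing $f_{j_1}(\rho) > 0$, i.e.\ $j_1 = 0$, while reality forces (2) and (3). I expect the main obstacle to be purely bookkeeping rather than conceptual: carefully translating the congruence conditions of Lemmas~\ref{lem_conj}–\ref{lem_conj2} and Corollaries~\ref{rjcf_cor_2}–\ref{rjcf_cor_3} into the stated closed forms in the two parity cases, and making sure the ``only if'' direction genuinely rules out \emph{every} stray branch — in particular confirming that for $p$ even the choice $j_1 = p/2$, though it keeps the Perron block real, produces a negative dominant eigenvalue and hence fails the strong Perron–Frobenius property, so it must be excluded in condition~(1) for \emph{both} parities even though reality alone would permit it when $p$ is even.
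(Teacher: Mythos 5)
Your proposal takes essentially the same approach as the paper: sufficiency is carried over from the proof of Theorem~\ref{main_thm} (the modulus-domination argument gives the strong Perron--Frobenius property once $j_1=0$, and Theorem~\ref{evpos_thm} converts this into eventual positivity), and necessity is established by observing that failure of condition~(1) destroys the strong Perron--Frobenius property while failure of condition~(2) or~(3) destroys reality of the corresponding block, via Corollary~\ref{rjcf_cor_2}. The paper phrases the necessity direction as a contrapositive rather than directly, but the content is identical. One small caveat: your parenthetical claim that a negative real eigenvalue ``when $p$ is odd forces the principal branch anyway'' is not quite right --- for $\lambda<0$ and $p$ odd, the unique branch producing a real block is $j=(p-1)/2$, not $j=0$, since $f_0(\lambda)=|\lambda|^{1/p}e^{\ii\pi/p}$ is complex. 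The theorem's condition~(2), which requires $j_k=0$, is therefore consistent only when $r_2=0$; this is an implicit hypothesis that the paper also leaves unaddressed in the odd case, so you are no worse off than the original, but the justification you give for dismissing it is misleading.
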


\begin{proof}[\sc Proof] 
We demonstrate {\it necessity} as {\it sufficiency} is shown in the proof of \hyperref[main_thm]{Theorem \ref*{main_thm}}. 

To this end, we demonstrate the contrapositive. Case 1: $p$ is odd. If the principal branch of the $p\tth$-root function is not selected for the Perron eigenvalue, then $X_j$ can not possess the strong Perron-Frobenius property; if $j_k \neq 0$ for some  $k \in \{ 2,\dots,r \}$, then $f_{j_k}(\jordan{n_k}{\lambda_k})$ is not real so that $X_j$ can not be real; similarly, $X_j$ is not real if  $j_k \neq (0,0)$ or $j_k \neq (j_{k_1},p -  j_{k_1})$ for some $k \in \{ r+1, \dots, r+c \}$.

Case 2: $p$ is even. Result is similar to the first case, but we note that, without loss of generality, we may assume that $A$ does not have any negative eigenvalues (else it can not possess a primary root).  
\end{proof}

\begin{thm} \label{main_thm_3}
Let $A$ be a primitive, nonsingular, derogatory matrix that possesses a real root, and let $X_j (U)$ be any nonprimary root of $A$ of the form
\begin{align*}
X_j (U) = 
R U 
\begin{bmatrix}
f_{j_1} (\rho) & &          							\\
& \bigoplus_{k=2}^r f_{j_k} \left( J_{n_k} ( \lambda_k ) \right)  & \\ 
& & \bigoplus_{k = r + 1}^{r + c} F_{j_k} (C_{n_k} (\lambda_k)) 
\end{bmatrix} 
\inv{U} \inv{R},
\end{align*}
where  $j =  \left( j_1, \dots,j_r, j_{r+1}, \dots, j_{r+c} \right)$ and $j_k = ( j_{k_1}, j_{k_2})$, for $k = r+1,\dots,r+c$. If $p$ is even, then $X_j (U)$ is eventually positive if and only if
\begin{enumerate}
\item $j_1 = 0$;
\item $j_k = 0$, or $j_k = p/2$, for all $k = 2,\dots,r$; 
\item $j_k = (0,0)$ or $j_k = ( j_{k_1},p -  j_{k_1} )$ for all $k = r+1, \dots, r+c$;
\item $U$ is selected to be real and nonsingular; and
\item Jordan blocks containing negative eigenvalues are transformed, via a permutation matrix, to blocks of the form \eqref{Ck_lambda} (see proof of \hyperref[ex_real_root_thm]{\rm Theorem \ref*{ex_real_root_thm}}) and branches for these blocks are selected in accordance with \hyperref[rjcf_cor_3]{\rm Corollary \ref*{rjcf_cor_3}}
\end{enumerate}
subject to the constraint that for each $j$, there exist $i$ and $k$, depending on $j$, such that $\lambda_i = \lambda_k$, while $j_i \neq j_k$.

\noindent If $p$ is odd, then $X_j (U)$ is eventually positive if and only if
\begin{enumerate}
\item $j_1 = 0$;
\item $j_k = 0$ for all $k = 2,\dots,r$; 
\item $j_k = (0,0)$ or $j_k = ( j_{k_1},p -  j_{k_1} )$ for all $k = r+1, \dots, r+c$; and
\item $U$ is selected to be real and nonsingular;
\end{enumerate}
subject to the constraint that for each $j$, there exist $i$ and $k$, depending on $j$, such that $\lambda_i = \lambda_k$, while $j_i \neq j_k$.
\end{thm}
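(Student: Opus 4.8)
The plan is to reduce Theorem~\ref{main_thm_3} to the primary case already settled in Theorem~\ref{main_thm_2}, together with the classification of nonprimary roots in Theorem~\ref{thm_class_rts2}. Recall from Theorem~\ref{thm_class_rts2} that every nonprimary root of the nonsingular $A = R J_\R \inv{R}$ has the form $X_j(U) = R U G_j \inv{U} \inv{R}$, where $G_j$ denotes the block-diagonal matrix $f_{j_1}(\rho) \oplus \bigoplus_{k=2}^r f_{j_k}(\jordan{n_k}{\lambda_k}) \oplus \bigoplus_{k=r+1}^{r+c} F_{j_k}(C_{n_k}(\lambda_k))$, and $U$ is a nonsingular matrix commuting with $J_\R$; the ``nonprimary'' qualifier is exactly the constraint that some repeated eigenvalue is assigned two different branches, so that the $j$-constraint in the statement is built in. Thus $X_j(U) = (RU)\, G_j\, (RU)^{-1}$ is a similarity of the block matrix $G_j$, and eventual positivity of $X_j(U)$ must be detected through Theorem~\ref{evpos_thm}: $X_j(U)$ and $X_j(U)^T$ both have the strong Perron--Frobenius property.

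First I would establish \emph{sufficiency}. Assume conditions (1)--(5) (even $p$) or (1)--(4) (odd $p$). Conditions (1)--(3), together with (5) in the even case, are precisely the conditions from Theorem~\ref{main_thm_2} and Corollaries~\ref{rjcf_cor_2}, \ref{rjcf_cor_3} that force $G_j$ to be \emph{real}; since $U$ is real and nonsingular by (4), $X_j(U) = RU G_j \inv{U}\inv{R}$ is a real matrix. Now the key point is that the strong Perron--Frobenius structure is insensitive to which root of the non-Perron blocks we take: because $A$ is primitive and nonsingular, $\rho = \rho(A)$ is simple and strictly dominant, so every non-Perron eigenvalue $\lambda$ satisfies $|\lambda| < \rho$, hence $|f_{j'}(\lambda)| < \rho^{1/p} = f_0(\rho)$ for every branch $j'$ (the strict inequality $|f_j(z)| < |f_{j'}(z')|$ used in the proof of Theorem~\ref{main_thm}). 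Consequently $\rho^{1/p}$ is a simple, strictly dominant, positive eigenvalue of $G_j$, and since $U$ commutes with $J_\R$ it preserves the $\rho$-eigenspace; the Perron eigenvector of $A$ (which is positive) is carried to the Perron eigenvector of $X_j(U)$. So $X_j(U)$ has the strong Perron--Frobenius property, and applying the same argument to $A^T$ (using $f(A)^T = f(A^T)$ and that $U^{-T}$ commutes with $J_\R^T$, or rather working with the transposed real Jordan form) gives it for $X_j(U)^T$. Theorem~\ref{evpos_thm} then yields eventual positivity.

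Next, \emph{necessity}, which I would prove by contrapositive exactly as in Theorem~\ref{main_thm_2}. If $U$ is not real, or if any branch condition in (1)--(3) (resp.\ (1)--(3) and (5)) fails, I claim $X_j(U)$ fails to be a real matrix, hence \emph{a fortiori} fails to be eventually positive. For (1): if $j_1 \neq 0$ then $f_{j_1}(\rho)$ is a non-real (or negative) $p$th root of $\rho$, and since $\rho$ is simple this eigenvalue of $X_j(U)$ is non-real (or negative), so $X_j(U)$ cannot be eventually positive as its spectral radius would not be a simple positive dominant eigenvalue. For (2)/(3) and (5): by Corollaries~\ref{rjcf_cor_2} and~\ref{rjcf_cor_3}, the relevant diagonal block $G_j$ fails to be real unless the stated branch pairing holds, and a non-real $G_j$ conjugated by a real $RU$ stays non-real — here I would note that $U$ commuting with $J_\R$ and $G_j$ being built from functions of $J_\R$ means $U$ and $G_j$ essentially commute block-wise, so realness of $RUG_j\inv{U}\inv{R}$ forces realness of $G_j$ when $U$ is real. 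Finally, if $U$ itself is not real, realness of $RUG_j\inv{U}\inv{R}$ would be a coincidence one must rule out; the cleanest route is to observe that $X_j(U) = X_j(I)$ composed with conjugation by $RU\inv{R}$, and that $X_j(I)$ (the primary root with the same branches) is real by the above, so $X_j(U)$ real and $X_j(I)$ real forces $RU\inv{R}$ to be real up to commuting with $X_j(I)$ — i.e.\ $U$ may be taken real without loss of generality, which is precisely condition (4).

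The main obstacle I anticipate is the last point: disentangling the roles of the branch choices $j$ and the conjugating matrix $U$ in the realness (and hence eventual positivity) analysis, since a clever non-real $U$ could in principle conceal a non-real $G_j$. The resolution is the observation that $U$ commutes with $J_\R$, hence with every polynomial in $J_\R$, hence (by the primary-function structure) it commutes with the primary root $G_{j'}$ obtained by replacing each repeated-eigenvalue branch-vector with a single common branch; this lets one factor $X_j(U)$ through a genuine primary root and push all non-realness either into $U$ (handled by (4)) or into the branch data (handled by (1)--(3),(5)). Once that factorization is in place, the argument is a direct transcription of the proofs of Theorems~\ref{main_thm} and~\ref{main_thm_2}.
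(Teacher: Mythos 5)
The paper states Theorem~\ref{main_thm_3} without proof, so there is no paper proof to compare against; presumably it is meant to follow the pattern of Theorem~\ref{main_thm_2}, which is the reduction you attempt. Your sufficiency argument is sound: the branch conditions make the block-diagonal core $G_j$ real, and since $\rho$ is simple and strictly dominant, any $U$ commuting with $J_\R$ acts as a nonzero scalar on the one-dimensional $\rho$-eigenspace, so the positive Perron vectors of $A$ and $A^T$ are carried to $X_j(U)$ and $X_j(U)^T$, and Theorem~\ref{evpos_thm} applies.

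The necessity direction, however, has a genuine gap that you flag but do not close, and in fact the gap appears unbridgeable. Your argument for conditions (2), (3), (5) --- that a non-real $G_j$ conjugated by $RU$ stays non-real --- presupposes $U$ real, while your proposed fix for condition (4) relies on $X_j(I)=RG_j\inv{R}$ being real, which is false precisely when (2), (3), (5) fail. A non-real $U$ really can conceal a non-real $G_j$: let $A$ be primitive, nonsingular, derogatory with a repeated positive non-Perron eigenvalue $\lambda$ appearing in two $1\times 1$ Jordan blocks, let $p$ be odd, choose branches $j_k=1$ and $j_{k'}=p-1$ on those two blocks (so (2) fails), and let $U$ be the identity except for a copy of $S=\begin{bmatrix}-\ii&-\ii\\ 1&-1\end{bmatrix}$ from Lemma~\ref{rjcf_lem} on that $2\times2$ block (so (4) fails). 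Then $UG_j\inv{U}$ replaces $D\bigl(f_1(\lambda)\bigr)$ by the real block $C\bigl(f_1(\lambda)\bigr)$, while the dominant eigenvalue is still $\rho^{1/p}$ with the same positive Perron vectors; hence $X_j(U)$ is real, has the strong Perron--Frobenius property on both sides, and by Theorem~\ref{evpos_thm} is eventually positive. So the contrapositive you outline fails, and the ``only if'' half of the theorem evidently needs an additional normalization of the pair $(j,U)$ --- or a standing restriction to real $U$ --- that neither your argument nor the theorem statement supplies.
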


\begin{ex} \label{thm_example2}
We demonstrate \hyperref[main_thm_3]{Theorem \ref*{main_thm_3}} via an example. Consider the matrix 
\begin{align*}
A = \frac{1}{9}
\begin{bmatrix}
32 & 32 & 14 & 23 & 5 & 32 & 14	& 23 & 5	\\
17 & 26 & 26 & 26 & 17 & 17 & 17 & 17 & 17 	\\
19 & 10 & 28 & 19 & 28 & 19 & 19 & 19 & 19	\\
18 & 18 & 18 & 27 & 27 & 18 & 18 & 18 & 18	\\
20 & 20 & 20 & 11 & 29 & 20 & 20 & 20 & 20	\\
17 & 17 & 17 & 17 & 17 & 26 & 26 & 26 & 17	\\
19 & 19 & 19 & 19 & 19 & 10 & 28 & 19 & 28	\\
18 & 18 & 18 & 18 & 18 & 18 & 18 & 27 & 27	\\
20 & 20 & 20 & 20 & 20 & 20 & 20 & 11 & 29
\end{bmatrix}.
\end{align*}
Note that $A = R J_\R \inv{R}$, where
\vspace{-12pt}
\begin{align*}
J_\R = \kbordermatrix{ 						\\
& 20 & \vrule & 0 & 0 & 0 & 0 & \vrule & 0 & 0 & 0 & 0 	\\
\cline{2-12}
& 0 & \vrule & 1 & 1 & 1 & 0 & \vrule & 0 & 0 & 0 & 0	\\
& 0 & \vrule & -1 & 1 & 0 & 1 & \vrule & 0 & 0 & 0 & 0	\\
& 0 & \vrule & 0 & 0 & 1 & 1 & \vrule & 0 & 0 & 0 & 0	\\
& 0 & \vrule & 0 & 0 & -1 & 1 & \vrule & 0 & 0 & 0 & 0	\\
\cline{2-12}
& 0 & \vrule & 0 & 0 & 0 & 0 & \vrule & 1 & 1 & 1 & 0	\\
& 0 & \vrule & 0 & 0 & 0 & 0 & \vrule & -1 & 1 & 0 & 1	\\
& 0 & \vrule & 0 & 0 & 0 & 0 & \vrule & 0 & 0 & 1 & 1 	\\
& 0 & \vrule & 0 & 0 & 0 & 0 & \vrule & 0 & 0 & -1 & 1	 	
}
\end{align*}
\begin{align*}
R = 
\begin{bmatrix} 
1 & 1 & 1 & 1 & 1 & 1 & 1 & 1 & 1 	\\ 
1 & -1 & 0 & 0 & 0 & 0 & 0 & 0 & 0  	\\  
1 & 0 & -1 & 0 & 0 & 0 & 0 & 0 & 0	\\ 
1 & 0 & 0 & -1 & 0 & 0 & 0 & 0 & 0	\\ 
1 & 0 & 0 & 0 & -1 & 0 & 0 & 0 & 0 	\\ 
1 & 0 & 0 & 0 & 0 & -1 & 0 & 0 & 0 	\\
1 & 0 & 0 & 0 & 0 & 0 & -1 & 0 & 0 	\\
1 & 0 & 0 & 0 & 0 & 0 & 0 & -1 & 0 	\\
1 & 0 & 0 & 0 & 0 & 0 & 0 & 0 & -1 	
\end{bmatrix},
\end{align*}  
and 
\begin{align*}
\inv{R} = \frac{1}{9}
\begin{bmatrix} 
1 & 1 & 1 & 1 & 1 & 1 & 1 & 1 & 1 	\\ 
1 & -8 & 1 & 1 & 1 & 1 & 1 & 1 & 1 	\\  
1 & 1 & -8 & 1 & 1 & 1 & 1 & 1 & 1	\\ 
1 & 1 & 1 & -8 & 1 & 1 & 1 & 1 & 1 	\\ 
1 & 1 & 1 & 1 & -8 & 1 & 1 & 1 & 1 	\\
1 & 1 & 1 & 1 & 1 & -8 & 1 & 1 & 1 	\\
1 & 1 & 1 & 1 & 1 & 1 & -8 & 1 & 1 	\\
1 & 1 & 1 & 1 & 1 & 1 & 1 & -8 & 1 	\\
1 & 1 & 1 & 1 & 1 & 1 & 1 & 1 & -8 	
\end{bmatrix}.
\end{align*} 
If $j = (0, (0,0),(1,1))$, then, following \hyperref[main_thm_3]{Theorem \ref*{main_thm_3}}, any matrix of the form 
\begin{align*} 
X_j (U) = 
R U
\begin{bmatrix}
\sqrt{20} &						\\
 & F_{(0,0)} (C_2(1+\ii))	&			\\
 & & F_{(1,1)} (C_2(1+\ii))
\end{bmatrix}
\inv{R} \inv{U},
\end{align*} 
where 
\begin{align*}
U = 
\kbordermatrix{ 							\\
& u_1 & \vrule & 0 & 0 & 0 & 0 & \vrule & 0 & 0 & 0 & 0 	\\ 
\cline{2-12}
& 0 & \vrule & u_2 & u_3 & 1 & 0 & \vrule & u_4 & u_5 & 1 & 0 	\\  
& 0 & \vrule & -u_3 & u_2 & 0 & 1 & \vrule & -u_5 & u_4 & 0 & 1	\\ 
& 0 & \vrule & 0 & 0 & u_2 & u_3 & \vrule & 0 & 0 & u_4 & u_5 	\\ 
& 0 & \vrule & 0 & 0 & -u_3 & u_2 & \vrule & 0 & 0 & -u_5 & u_4 	\\
\cline{2-12}
& 0 & \vrule & u_6 & u_7 & 1 & 0 & \vrule & u_8 & u_9 & 1 & 0 	\\
& 0 & \vrule & -u_7 & u_6 & 0 & 1 & \vrule & -u_9 & u_8 & 0 & 1 	\\
& 0 & \vrule & 0 & 0 & u_6 & u_7 & \vrule & 0 & 0 & u_8 & u_9 	\\
& 0 & \vrule & 0 & 0 & -u_7 & u_6 & \vrule & 0 & 0 & -u_9 & u_8} \in \mr{9},
\end{align*} 
$\det{(U)} \neq 0$, is an eventually positive square-root of $A$.
\end{ex}

Next, we present an analog of \hyperref[nnsing_thm]{Theorem \ref*{nnsing_thm}} for real matrices.

\begin{thm} \label{sing_evproots_thm}
Let the primitive matrix $A \in \mr{n}$ have the real Jordan canonical form $\inv{R} A R = J_\R = J_0 \oplus J_1$, where $J_0$ collects all the singular Jordan blocks and $J_1$ collects the remaining Jordan blocks. If $A$ possesses a real root, then all eventually positive $p$th-roots of $A$ are given by $A = R \left( X_0 \oplus X_1 \right) \inv{R}$, where $X_1$ is any $p$th-root of $J_1$, characterized by \hyperref[main_thm_2]{\rm Theorem \ref*{main_thm_2}} or \hyperref[main_thm_3]{\rm Theorem \ref*{main_thm_3}}, and $X_0$ is a real $p$th-root of $J_0$.
\end{thm}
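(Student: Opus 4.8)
The plan is to reduce the claim to the nonsingular case already settled by Theorems~\ref{main_thm_2} and~\ref{main_thm_3}, by first splitting off the nilpotent part. Since $A$ has a real (hence complex) $p$th-root, combining Theorem~\ref{nnsing_thm} with Theorem~\ref{thm_class_rts2} (working through the similarity $\inv{T} J_\R T = J$ used in the proof of Theorem~\ref{thm_class_rts2}, under which the singular blocks are unchanged) shows that every $p$th-root of $A$ has the form $X = R(X_0 \oplus X_1)\inv{R}$, where $X_0^p = J_0$, $X_1^p = J_1$, and the roots $X_1$ of the nonsingular block $J_1$ are further classified as primary or nonprimary. Because $R$ is real, $X$ is real if and only if both $X_0$ and $X_1$ are real. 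Moreover $J_0$ is nilpotent, so any $X_0$ with $X_0^p = J_0$ is nilpotent; hence $\sigma(X) = \sigma(X_1) \cup \{0\}$ (with multiplicities), and the summand $X_0$ contributes no nonzero eigenvalue.

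Next I would characterize eventual positivity via Theorem~\ref{evpos_thm}: $X$ is eventually positive if and only if $X$ and $X^T$ possess the strong Perron--Frobenius property. Here primitivity of $A$ is the essential ingredient: $\rho := \rho(A) > 0$ is the \emph{unique} eigenvalue of $A$ of maximum modulus, so the eigenvalue of $X$ of largest modulus is a single $p$th-root of $\rho$, of modulus $\rho^{1/p}$, and it strictly dominates every other eigenvalue of $X$ --- including the zero eigenvalues coming from $X_0$. Thus $X$ has the strong Perron--Frobenius property exactly when (i) the principal branch is chosen at $\rho$, so that this dominant eigenvalue is the positive real number $\rho^{1/p}$, necessarily simple since it sits in a $1\times 1$ block and no other eigenvalue of $X$ matches it in modulus; and (ii) the associated eigenvector is positive. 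For (ii) one checks that $X x = \rho^{1/p} x$ for the Perron eigenvector $x > 0$ of $A$, because $X$ is built from the same transforming matrix $R$ whose column at the simple Perron block is $x$; in the nonprimary case, the commuting factor $U$ necessarily acts as a nonzero scalar on the one-dimensional Perron generalized eigenspace, so it merely rescales $x$. Applying the same reasoning to $A^T$ (also primitive, with positive left Perron vector $y$) via $f(A)^T = f(A^T)$ shows that $X^T$ has the strong Perron--Frobenius property under the identical branch conditions and under $U$ being real.

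Finally I would match these conditions with the hypotheses: the constraints on the branch data of $X_1$ are precisely conditions (1)--(3) of Theorem~\ref{main_thm_2} in the primary case and (1)--(5) of Theorem~\ref{main_thm_3} in the nonprimary case, since the strong-Perron--Frobenius analysis there involves only the non-nilpotent blocks and is unaffected by adjoining $X_0$; and the only remaining requirement is that $X_0$ be a real $p$th-root of $J_0$, which exists because, applying the block-diagonal structure above to the given real root of $A$, its restriction to the $J_0$-block is a real $p$th-root of $J_0$. Conversely, any real $X_0$ with $X_0^p = J_0$ together with an $X_1$ meeting the conditions of Theorem~\ref{main_thm_2} or~\ref{main_thm_3} yields $X = R(X_0 \oplus X_1)\inv{R}$ real and, by the verification above, eventually positive. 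I expect the main obstacle to be the bookkeeping at the interface of the two decompositions --- confirming that splitting off the singular blocks leaves the Perron block entirely inside $J_1$, that the nilpotent summand creates no new dominant eigenvalue and does not perturb the Perron eigenvectors, and that in the nonprimary case the commuting matrix $U$ can be taken to preserve (up to scaling) the positivity of the Perron eigenvectors of both $X$ and $X^T$.
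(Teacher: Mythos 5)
The paper gives no proof of this theorem: it is stated as the ``analog of Theorem~\ref{nnsing_thm} for real matrices,'' and Theorem~\ref{nnsing_thm} is itself only cited from \cite{hl2011}. So there is no paper proof to compare against, and your sketch should be judged on its own merits, where it is sound. You correctly split off the nilpotent part via Theorem~\ref{nnsing_thm} transferred to the real Jordan form through the $T$-similarity used in the proof of Theorem~\ref{thm_class_rts2}; you observe that the real nilpotent summand $X_0$ contributes only zero eigenvalues and therefore changes neither the dominant eigenvalue $\rho^{1/p}$, nor its simplicity, nor the positivity of the left and right Perron eigenvectors of $X$ and $X^T$; and from this you import the branch and commuting-matrix conditions from Theorems~\ref{main_thm_2} and~\ref{main_thm_3} unchanged. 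That last observation --- the strong Perron--Frobenius analysis lives entirely in the nonsingular block $J_1$ and is invariant under adjoining a real root of $J_0$ --- is exactly the reason the earlier characterizations carry over, and your handling of the nonprimary case ($U$ acts by a nonzero scalar on the one-dimensional Perron eigenspace since $\rho$ is simple in the primitive matrix $A$) mirrors the reasoning already used in the proof of Theorem~\ref{main_thm}. The two concerns you flag at the end are the right ones for a careful write-up, but the arguments you sketch already dispose of them; no genuine gap.
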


\begin{rem} 
It should be clear that Theorems \hyperref[main_thm]{\ref*{main_thm}}, \hyperref[main_thm_2]{\ref*{main_thm_2}}, \hyperref[main_thm_3]{\ref*{main_thm_3}}, and \hyperref[sing_evproots_thm]{\ref*{sing_evproots_thm}} remain true if the assumption of primitivity  is replaced with eventually positivity.
\end{rem}

Recall that for $A \in \mc{n}$ with no eigenvalues on $\R^-$, the {\it principal} $p\tth$-root, denoted by $A^{1/p}$, is the unique $p\tth$-root of $A$ all of whose eigenvalues lie in the segment $\{ z : -\pi/p < \arg(z) < \pi/p\}$ \cite[Theorem 7.2]{h2008}. In addition, recall that a nonnegative matrix $A$ is said to be {\it stochastic} if $\sum_{j=1}^n a_{ij} = 1$, for all $i=1,\dots,n$. 

In \cite{hl2011}, being motivated by discrete-time Markov-chain applications, two classes of stochastic matrices were identified that possess stochastic principal $p\tth$-roots for all $p$. As a consequence, and of particular interest, for these classes of matrices the twelfth-root of an annual transition matrix is itself a transition matrix. A (monthly) transition matrix that contains a negative entry or is complex is meaningless in the context of a model, however the following remark demonstrates that, under suitable conditions, a matrix-root of a primitive stochastic matrix will be \emph{eventually stochastic} (i.e., eventually positive with row sums equal to one).

Eventual stochasticity may be useful in the following manner: consider, for example, the application of discrete-time Markov chains in credit risk: let $P=[p_{ij}] \in \mr{n}$ be a primitive transition matrix where $p_{ij} \in [0,1]$ is the probability that a firm with rating $i$ transitions to rating $j$. Such matrices are derived via annual data, and the twelfth-root of such a matrix would correspond to a monthly transition matrix if the entries are nonnegative. 

In application, the twelfth-root would be used for \emph{forecasting} purposes (e.g., to estimate the likelihood that a firm with credit-rating $i$, transitions to rating $j$, $m$ months into the future). Thus, if $R$ is the twelfth-root of $P$ and $R^m \geq 0$, then $r_{ij}^{(m)}$ is a candidate for the aforementioned probability. Moreover, it is well-known that $n^2 - 2n +2$ is a sharp upper-bound for the \emph{primitivity index} of a primitive matrix (see, e.g., \cite[Corollary 8.5.9]{hj1990}) so that eventual stochasticity is feasible in application.  

\begin{rem} 
Theorems \hyperref[main_thm]{\ref*{main_thm}}, \hyperref[main_thm_2]{\ref*{main_thm_2}}, \hyperref[main_thm_3]{\ref*{main_thm_3}}, and \hyperref[sing_evproots_thm]{\ref*{sing_evproots_thm}} remain true if stochasticity is added as an assumption on the matrix $A$ and the conclusion of eventually postivity is replaced with eventually stochasticity.
\end{rem}

We conclude with the following remark.

\begin{rem}
If $A$ is an eventually positive matrix matrix, then $B := A^q$ is eventually positive for all $q \in \N$: thus, all the previous results hold for rational powers of $A$.  
\end{rem}

\bibliographystyle{model1-num-names}
\bibliography{laabib}

\end{document}